\font\smallit=cmti10
\newtheorem{one}{Theorem}[section]
 \newtheorem{two}[one]{Theorem}
 \newtheorem{three}[one]{Theorem}
 \newtheorem{Lemma 1}[one]{Lemma}
 \newtheorem{Lemma 4}[one]{Lemma}
 \newtheorem{Lemma 2}[one]{Lemma}
 \newtheorem{Lemma 3}[one]{Lemma}
 \newtheorem{four}[one]{Lemma}
  \newtheorem{abc}[one]{Lemma}
  \newtheorem{five}[one]{Lemma}
  \newtheorem{Lemma 5}[one]{Lemma}
  \newtheorem{new}[one]{Lemma}
  \newtheorem{Corollary}[one]{Corollary}
\def\h25{\hspace{-.3cm}}   
\begin{document}
\begin{center}
{\bf On Newman's phenomenon in higher bases}
\vskip 20pt {\bf Sai Teja Somu}\\
{\smallit Department of Mathematics, Indian Institute of Technology
Roorkee, Roorkee - 247 667, India}\\
{\tt somuteja@gmail.com }\\
\end{center}
\vskip 30pt

\centerline{\bf Abstract} A well known result of Newman says that upto a limit, multiples of $3$ with even number of 1's in binary representation always exceed multiples of $3$ with odd number of 1's. The phenomenon of preponderance of even number of 1's is now known as Newman's phenomenon. We show that this phenomenon exists for higher bases.
Let $b$ be a positive integer($\geq 2$). Let $A_{b}$ be the set of all natural numbers which contain only 0's and 1's in b-ary expansion and $S^{(b)}_{q,i}(n)$ be the difference between the corresponding number of $k_e<n$, $k_e\equiv i \mod q$, $k_e\in A_{b}$ and $k_e$ has even number of 1's in b-ary expansion and the number of $k_o$ $k_o<n$, $k_o\equiv i \mod q$, $k_o\in A_{b}$ and $k_o$ has odd number of 1's in b-ary expansion. Let $q$ be a multiple or divisor of $b+1$ which is relatively prime to $b$ then we show that  $S^{(b)}_{q,0}(n)>0$ for sufficiently large $n$. We show that there is a stronger Newman's phenomenon in $A_b$ in the following sense. If $b>2$ and  $n=\sum_{i=0}^{k-1}b_i2^i$ with $b_i\in \{0,1\}$, let $b(n)=\sum_{i=0}^{k-1}b_ib^i$ then  $\lim_{n\rightarrow \infty} \frac{S^{(2)}_{3,0}(n)}{S^{(b)}_{b+1,0}(b(n))}=0$. That is, for the same number of terms there is stronger preponderance in $A_b$ than in $A_2=\mathbb{N}$. In the last section we show that number of primes $p\leq x$ for which $S_{p,0}^{(b)}(n)>0$ for sufficiently large $n$ is $o\left(\frac{x}{\log x}\right)$.

\section{Introduction}

L.Moser conjectured that when the multiples of $3$ are written in binary, upto a limit, numbers with even number of $1$'s always exceed numbers with odd number of $1$'s. 
Newman in \cite{A} proved that the  conjecture is true. To be precise, he proved that $S^{(2)}_{3,0}(n)>0$ for all $n$. If we consider a number $n$ let $b_{k-1}\cdots b_0$ be the binary expansion of $n$. Since $2\equiv -1 \mod 3$, we have \[n\equiv b_0-b_1+b_2\cdots+(-1)^{k-1}b_{k-1} \mod 3.\]
Let $N$ be a natural number and $N=n_{k-1}n_{k-2}\cdots n_0$ be binary expansion of $N$. The difference of multiples of 3 having even number of 1's and odd number of 1's is 
\begin{equation}
S^{(2)}_{3,0}(n)=\sum_{\substack{b_0-b_1+\cdots+(-1)^{k-1}b_{k-1} \equiv 0\mod 3\\b_{k-1}b_{k-2}\cdots b_0<n_{k-1}n_{k-2}\cdots n_0}}(-1)^{b_0+\cdots+b_{k-1}},
\end{equation}
where $<$ denotes lexicographic order among sequences of finite length.
In (1) if we replace $2$ by an even natural number $b$ then the sum appears to increase as $b$ increases. That is, if we consider the following sum 
\begin{equation}
S^{(b)}_{b+1,0}(b(n))=\sum_{\substack{b_0-b_1+\cdots+(-1)^{k-1}b_{k-1} \equiv 0\mod b\\b_{k-1}b_{k-2}\cdots b_0<n_{k-1}n_{k-2}\cdots n_0}}(-1)^{b_0+\cdots+b_{k-1}}
\end{equation}
then the sum seems to increase as $b$ increases.  
So we can guess that $S^{(b)}_{b+1,0}(b(n))>0$  
for sufficiently large $n$. We prove this result in section 2.
Let $A_{b}$ be the set of all natural numbers which contain only 0's and 1's in b-ary expansion. Then upto a limit, multiples of $b+1$ with even number of 1's in $A_b$ always exceed multiples of $b+1$ with odd number of 1's in $A_b$. So there is Newman's phenomenon among the multiples of $b+1$ in the set $A_{b}$. In fact the Newman's phenomenon gets stronger as $b$ increases in the following sense. We show that for any two even numbers $b_1,b_2$ with $b_1>b_2$ we have \[\lim_{n\rightarrow \infty}\frac{S^{(b_2)}_{b_2+1,0}(b_2(n))}{S^{(b_1)}_{b_1+1,0}(b_1(n))}=0.\] That is, for the same number of terms in the sets $A_{b_1}$ and $A_{b_2}$  there is a stronger preponderance in $A_{b_1}$ compared to $A_{b_2}$.
 
 In fact we prove more than $S^{(b)}_{b+1,0}(b(n))>0$. We prove the following theorem in section 2.
 
 \begin{one}\label{Theorem 1}
  If $b$ and $q$ are two positive integers such that $b\geq 2$, $(b+1)|q$ and $(b,q)=1$ where $(b,q)$ is the greatest common divisor of $q$ and $b$. Let $v$ be an integer, then for sufficiently large $N$
  \begin{enumerate}
  \item $S^{(b)}_{q,v(b+1)}(N)>0$.
  \item $S^{(b)}_{q,v(b+1)+1}(N)<0$.
  \item if $b\leq 3$ then $S^{(b)}_{q,v(b+1)-1}(N)<0$.
  \end{enumerate}
 \end{one}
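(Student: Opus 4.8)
The plan is to reduce the theorem, via a recursion on the $b$-ary digits of $N$, to the asymptotics of the \emph{complete digit sums}
\[
G_m(r):=\sum_{\substack{v\in\{0,1\}^m\\ \langle v\rangle_b\equiv r\pmod q}}(-1)^{|v|},\qquad \langle v\rangle_b:=\sum_{i=0}^{m-1}v_ib^i,\quad |v|:=\sum_{i=0}^{m-1}v_i .
\]
First I would note that if $N=\sum_{i=0}^{k-1}n_ib^i$ with $n_{k-1}\neq0$, then grouping the length-$k$ words $v\in\{0,1\}^k$ with $v<N$ by the highest position $i$ at which they disagree with the digit string of $N$ yields the exact identity
\[
S^{(b)}_{q,r}(N)=\sum_{\substack{0\le i\le k-1\\ n_j\in\{0,1\}\ (j>i),\ n_i\ge1}}(-1)^{\Sigma_i}\Bigl(G_i(r-P_i)-\mathbf{1}_{\{n_i\ge2\}}\,G_i(r-P_i-b^i)\Bigr),
\]
where $P_i:=\sum_{j>i}n_jb^j$, $\Sigma_i:=\sum_{j>i}n_j$, and $G_0(s)=\mathbf{1}_{\{q\mid s\}}$; in particular $S^{(b)}_{q,r}(b^k)=G_k(r)$. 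So the problem splits into (a) determining the sign and size of $G_m(r)$, and (b) showing the top active term of the display controls the rest.

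For (a) I would use a roots-of-unity filter: with $\omega=e^{2\pi i/q}$,
\[
G_m(r)=\frac1q\sum_{j=1}^{q-1}\omega^{-jr}\prod_{i=0}^{m-1}\bigl(1-\omega^{jb^i}\bigr),
\]
the $j=0$ term vanishing and, since $(b,q)=1$, no factor being zero. The structural point is that, writing $q=(b+1)t$ and using $b^i\equiv(-1)^i\pmod{b+1}$, the characters $j=tj'$ factor through $\mathbb Z/(b+1)$: for them $\prod_{i<m}(1-\omega^{jb^i})=(1-\zeta^{j'})^{\lceil m/2\rceil}(1-\zeta^{-j'})^{\lfloor m/2\rfloor}$ with $\zeta=e^{2\pi i/(b+1)}$, so they grow like $\lambda^{m/2}$, where $\lambda:=2-2\cos\bigl(\tfrac{2\pi\lfloor(b+1)/2\rfloor}{b+1}\bigr)=\max_{j'}\bigl|(1-\zeta^{j'})(1-\zeta^{-j'})\bigr|$. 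The key lemma I would then need is that every other character ($t\nmid j$) grows strictly more slowly, i.e. $\bigl|\prod_{i<d}(1-\omega^{jb^i})\bigr|^{1/d}<\sqrt\lambda$ with $d=\mathrm{ord}_q(b)$; this is the main arithmetic input — it amounts to bounding $\prod_{t'\in j\langle b\rangle}|1-\omega^{t'}|$ over cosets of $\langle b\rangle$, and I expect it to require a careful case analysis. Granting it, $G_m(r)=c_{r,m}\lambda^{m/2}+O(\mu^{m/2})$ for some $\mu<\lambda$, where $c_{r,m}$ is an explicit real constant depending only on $r\bmod(b+1)$ and the parity of $m$; a short trigonometric computation then gives $\operatorname{sgn}(c_{r,m})=+1$ for $r\equiv0$ and $=-1$ for $r\equiv\pm1\pmod{b+1}$, for all admissible $b$.

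For (b) I would run a simultaneous induction on $k=$ (number of $b$-ary digits of $N$), maintaining, for every residue class $r\bmod(b+1)$, a two-sided bound $\alpha_r\lambda^{k/2}\le S^{(b)}_{q,r}(N)\le\beta_r\lambda^{k/2}$ valid for all $N$ with at most $k$ digits ($k$ large), with constants $\alpha_r<\beta_r$ whose common sign is dictated by $c_{r,\cdot}$. The induction step is precisely the digit identity above: the top active term is $G_{k-1}(r)$ (when $n_{k-1}=1$) or $G_{k-1}(r)-G_{k-1}(r-b^{k-1})$ (when $n_{k-1}\ge2$), of exact order $\lambda^{(k-1)/2}$ with the sign read off from the $c$'s, while each lower term is bounded, position by position, by the inductive hypothesis applied to the trailing word; the resulting geometric-type sum must be kept inside $[\alpha_r\lambda^{k/2},\beta_r\lambda^{k/2}]$. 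Running this for $r\equiv0$ (with $\alpha_0>0$) yields part (1), and for $r\equiv1$ (with $\beta_1<0$) yields part (2).

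I expect the genuinely delicate step — and the reason part (3) is only claimed for $b\le3$ — to be the consistency of this bound system for the class $r\equiv-1$. The recursion for $r\equiv-1$ feeds into the class $r\equiv-2\equiv b-1\pmod{b+1}$, for which the auxiliary constant $c_{b-1,\cdot}$ is \emph{positive} — the wrong sign for keeping $-S^{(b)}_{q,b-1}(\cdot)$ under control — so one needs the negative excursions of $S^{(b)}_{q,b-1}$ to stay strictly below the typical size of $G_{k-1}(-1)$, an inequality among the relevant constants $\alpha_r,\beta_r$ that holds only when $b\le3$. For $b=2$ and $b=3$, however, $q=b+1\in\{3,4\}$ forces the products $\prod(1-\omega^{jb^i})$ into rational closed forms (essentially powers of $\sqrt3$, resp. of $2$), making the whole system rigid; there I would replace the asymptotic estimates by exact formulas for $G_m$ and an exact recursion, and that rigidity is what lets part (3) go through in those two cases.
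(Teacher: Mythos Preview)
Your overall architecture matches the paper's: the digit recursion is Lemma~2.1/equation~(2.1), the roots-of-unity expression for $G_m(r)=S^{(b)}_{q,r}(b^m)$ is Lemma~2.2, the identification of the dominant characters as those factoring through $\mathbb Z/(b+1)$ is Lemma~2.4 (your $\sqrt\lambda$ is the paper's $2\sin\tfrac{\pi l}{2l+1}$), and the resulting asymptotic for $G_m(r)$ is Lemma~2.5. The paper then does a direct telescoping estimate (using geometric series on the tail) rather than your inductive two-sided bound system, but these are interchangeable.

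Two genuine gaps. First, the step you flag as ``the main arithmetic input'' and defer to ``a careful case analysis'' is the heart of the proof, and you have not supplied it. The paper's device (Lemma~2.3) is the pointwise inequality: for $|z|=1$ with $|1-z|>2\sin\tfrac{\pi l}{2l+1}$ one has $|1-z|\,|1-z^{b}|<4\sin^2\tfrac{\pi l}{2l+1}$. This lets one pair each ``too large'' factor in $\prod_{p<s}|1-\zeta_q^{mb^p}|$ with its successor (which is then forced to be small), driving the whole product strictly below $(2\sin\tfrac{\pi l}{2l+1})^s$ unless $m\equiv\pm ql/(2l+1)\pmod q$. Without this lemma you do not have a proof of the dominance claim.

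Second, your reading of part~(3) is off. The hypothesis is $(b+1)\mid q$, not $q=b+1$, so the ``rational closed forms for $q\in\{3,4\}$'' you plan to invoke are not available. The paper's route is entirely different and much shorter: for $b=3$ (indeed for every odd $b$) one observes that $q$ is even and $n\equiv s_b(n)\pmod 2$ for $n\in A_b$, so every $n\equiv v(b+1)-1\pmod q$ in $A_b$ has odd digit sum and $S^{(b)}_{q,v(b+1)-1}(N)<0$ is immediate; the case $b=2$ is quoted from \cite{F,H}. Your structural story about the recursion feeding into the class $r\equiv -2$ with the ``wrong sign'' is not the mechanism at play.
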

 Theorem \ref{Theorem 1} partially generalizes Theorem 1 of \cite{H} which states that $S^{(2)}_{3k,0}(n)>0$ for almost all $n$.
 
 In section 3 we prove the following result.
 \begin{two}\label{Theorem 2}
   If $d>1$ is a divisor of $(b+1)$ for $b\geq 2$, then for sufficiently large $N$
   \begin{enumerate}
   \item $S^{(b)}_{d,0}(N)>0$.
   \item $S^{(b)}_{d,1}(N)<0$.
   \item $S^{(b)}_{d,-1}(N)\leq0$ and if $d>3$ then $S^{(b)}_{d,-1}(N)<0$.
   \end{enumerate}
  \end{two}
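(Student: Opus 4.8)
The plan is to reduce the three statements to a single generating identity and then run Fourier analysis over $\mathbb Z/d\mathbb Z$. Since $d\mid b+1$ we have $b\equiv-1\pmod d$, so for $k=\sum_{i\ge 0}\epsilon_i b^i\in A_b$ one has $k\equiv\sum_i(-1)^i\epsilon_i\pmod d$; combined with the order-preserving bijection $A_b\to\mathbb N$ sending $\sum_i\epsilon_ib^i$ to $\sum_i\epsilon_i2^i$, this identifies $S^{(b)}_{d,v}(N)$ with
\[
T_v(n):=\sum_{0\le\ell<n}(-1)^{s(\ell)}\,[\,\sigma(\ell)\equiv v\bmod d\,],
\]
where $n$ is the number of elements of $A_b$ below $N$ and $s(\ell),\sigma(\ell)$ are the sum and the alternating sum of the binary digits of $\ell$; since $n\to\infty$ as $N\to\infty$, it suffices to prove the three sign statements for $T_v(n)$ with $n$ large. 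I would work with $T(n):=\sum_{0\le\ell<n}(-1)^{s(\ell)}x^{\sigma(\ell)}\in\mathbb Z[x,x^{-1}]/(x^d-1)$, which obeys the digit recursion $T(n)=g_{K-1}-x^{(-1)^{K-1}}T(n-2^{K-1})$ with $K$ the binary length of $n$ and $g_a:=\prod_{i=0}^{a-1}(1-x^{(-1)^i})$; iterating and using $(1-x)(1-x^{-1})=P$ with $P:=2-x-x^{-1}$ gives $g_{2p}=P^p$ and $g_{2p+1}=(1-x)P^p$. Passing to $d$-th roots of unity, $T_v(n)=\frac1d\sum_{j=0}^{d-1}\omega^{-vj}U_j(n)$ with $\omega=e^{2\pi i/d}$ and $U_j(n):=\widehat T(n)(\omega^j)$, and the key structural fact is
\[
\widehat P(\omega^j)=2-2\cos(2\pi j/d)=4\sin^2(\pi j/d)\ge 0,\qquad\text{with equality only at }j=0.
\]

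Next I would isolate the dominant Fourier mode(s). Two cheap inputs: the mode $j=0$ is bounded, because $P|_{x=1}=0$ kills every term of $T(n)$ but the lowest, so $\widehat T(n)(1)\in\{0,\pm1\}$; and a short induction on $K$ via the recursion shows $|U_j(n)|=o(n^{\theta})$ for every $\theta>\theta_j:=\tfrac12\log_2\widehat P(\omega^j)$, where $\theta_j<1$ whenever $j\ne d/2$. If $d$ is even the dominant index is $j=d/2$, where $\omega^{d/2}=-1$; since $s(\ell)+\sigma(\ell)=2\sum_{i\ \mathrm{even}}\epsilon_i$ is even, one gets $U_{d/2}(n)=\sum_{\ell<n}(-1)^{s(\ell)+\sigma(\ell)}=n$ exactly, whence $T_v(n)=\tfrac{(-1)^v}{d}\,n+o(n)$ and (1), (2), (3) follow immediately (strictly in (3) once $d\ge4$, and trivially for $d=2$, where $-1\equiv1$). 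If $d$ is odd the dominant indices are the conjugate pair $j_0=(d-1)/2$ and $d-j_0$, with $\widehat P(\omega^{j_0})=4\cos^2(\pi/2d)\ge3$, so $\theta_{j_0}>0$; fixing $\theta'$ with $\max_{j\ne 0,j_0,d-j_0}\theta_j<\theta'<\theta_{j_0}$ (possible since $j_0$ and $d-j_0$ are the unique maximisers of $j\mapsto\sin^2(\pi j/d)$), and writing $U_{d-j_0}(n)=\overline{U_{j_0}(n)}$, the three assertions reduce to
\[
\operatorname{Re}\bigl(U_{j_0}(n)\bigr)\gg n^{\theta'},\qquad\operatorname{Re}\bigl(\omega^{-j_0}U_{j_0}(n)\bigr)\ll-\,n^{\theta'},\qquad\operatorname{Re}\bigl(\omega^{j_0}U_{j_0}(n)\bigr)\le 0,
\]
the last to be strengthened to $\ll-n^{\theta'}$ when $d>3$.

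The odd-$d$ case is the heart of the matter and is a Newman-type estimate — for $d=3$ it is Newman's theorem together with two close variants. The strategy is to confine the argument of $U_{j_0}(n)$ to a narrow sector. Grouping the binary digits of $n$ into base-$4$ digits $c_0,\dots,c_{m-1}\in\{0,1,2,3\}$ and iterating the recursion one block at a time gives
\[
U_{j_0}(n)=\sum_{i=0}^{m-1}\Bigl(\prod_{t>i}\mu(c_t)\,\omega^{j_0\rho(c_t)}\Bigr)\,\lambda(c_i)(\omega^{j_0})\;\widehat P(\omega^{j_0})^{\,i},
\]
where $\mu(c)\in\{\pm1\}$, $\rho(c)\in\{0,\pm1\}$ and $\lambda(c)\in\{0,1,1-x,1-x-x^{-1}\}$ record the effect of one block; here $\lambda(1)(\omega^{j_0})=1$, $\lambda(2)(\omega^{j_0})=2\cos(\pi/2d)\,e^{-i\pi/2d}$ and $\lambda(3)(\omega^{j_0})=1+2\cos(\pi/d)>0$, so every nonzero summand has modulus $\ge1$ and argument in a fixed finite set. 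I would then fix $\ell$ so that $\omega^{2\ell j_0}$ is close to $1$, bundle the blocks into super-blocks of $\ell$ blocks each, and induct over super-blocks keeping $U_{j_0}(n)$ inside a controlled sector; this reduces the sign and size claims to a finite check depending only on $K\bmod 2\ell$ and the leading few blocks of $n$. The obstacle is that $\widehat P(\omega^{j_0})=4\cos^2(\pi/2d)<4$, so the triangle-inequality bound on the tail $\sum_{i<m-1}$ does not dominate the leading term, and the cancellation must be tracked honestly — exactly Newman's difficulty at $d=3$; pushing this bookkeeping through uniformly in $d$, presumably by invoking the quantitative lemmas of Section 2 rather than redoing the Newman bound from scratch, is where the real work lies. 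Finally, the class $-1$ is the borderline case: the building increments of $U_{j_0}(n)$ carry arguments $0$ or $-\pi/2d$, so its limiting sector is not symmetric about $\mathbb R_{>0}$, and for $d=3$ the twist by $\omega^{j_0}$ only yields $\operatorname{Re}(\omega^{j_0}U_{j_0}(n))\le0$ (equality attained along a sparse set of $n$, so that $T_{-1}(n)=\tfrac13U_0(n)$ may vanish), whereas for $d>3$ the extra angular slack makes it strictly negative — this is the source of the $d>3$ hypothesis in (3), in parallel with the $b\le3$ hypothesis of Theorem~\ref{Theorem 1}.
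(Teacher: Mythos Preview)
Your opening move is exactly the paper's: use that $b\equiv-1\pmod d$ so that the residue of an element of $A_b$ mod $d$ depends only on its digit pattern, and transport the problem along the order-preserving digit bijection out of $A_b$. But you then send $A_b$ to $\mathbb N=A_2$, where the congruence condition becomes the nonstandard ``$\sigma(\ell)\equiv v\pmod d$'' and you are forced to rebuild a Newman estimate from scratch. The paper instead sends $A_b$ to $A_{d-1}$: since $d-1\equiv-1\pmod d$ as well, the congruence $k\equiv v\pmod d$ survives intact, and one lands on
\[
S^{(b)}_{d,i}\bigl(b^{k_1}+\cdots+b^{k_r}\bigr)=S^{(d-1)}_{d,i}\bigl((d-1)^{k_1}+\cdots+(d-1)^{k_r}\bigr),
\]
which is literally the quantity governed by Theorem~\ref{Theorem 1} with base $d-1$ and modulus $q=d$. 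The whole odd-$d$ case is then a one-line citation of Theorem~\ref{Theorem 1}; no new Fourier analysis is needed. In fact your $T_v(n)$ is nothing other than $S^{(d-1)}_{d,v}$ evaluated at the $(d-1)$-ary number with the same digit string as $n$, so the ``quantitative lemmas of Section~2'' you allude to are not merely helpful inputs---they already \emph{are} the result you want.

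Because you missed this identification, your odd-$d$ argument is left as a programme (``pushing this bookkeeping through \ldots\ is where the real work lies'') rather than a proof: the sector-confinement induction you sketch would, if carried out, essentially reprove Theorem~\ref{Theorem 1} in disguise, and as written it is not complete. Your even-$d$ analysis is fine and matches the paper's observation that this case is trivial (when $d$ is even, $b$ is odd, and the parity of $s_b(n)$ is determined by $n\bmod 2$, hence by $n\bmod d$). The upshot: replace the target of your bijection by $A_{d-1}$ rather than $A_2$, and the theorem drops out of Theorem~\ref{Theorem 1} immediately.
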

  Let $\mathbb{P}_t(b)$ denote the set of all primes such that $\frac{p-1}{ord_b(p)}=t$ where $ord_p(b)$ denotes the order of $b$ in the multiplicative group $(\mathbb{Z}/p\mathbb{Z})^*$. 
  In the last section we prove that the primes satisfying $S^{(b)}_{p,0}(N)>0$ are of zero density in the set of primes. 
  \begin{three}\label{Theorem 3}
    The number of primes $p \in \mathbb{P}_{t}(b)$ for a given $t>1$ such that $S^{(b)}_{p,0}(N)>0$ for sufficiently large $N$ are bounded by 
       \begin{equation*}
       p\leq C^*t^{2}log^{2}t,
       \end{equation*}
    where $C*>0$  only depends on $b$. If $t=1$ the number of primes are bounded by \[p\leq C_1,\] where $C_1>0$ only depends on $b$. Furthermore,the total number of primes $p\leq x$ such that $S^{(b)}_{p,0}(n)>0$ for sufficiently large $n$ is $o\left(\frac{x}{\log x}\right)$ as $x\rightarrow \infty$. 
   \end{three}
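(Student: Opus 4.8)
The plan is to split the proof of Theorem~\ref{Theorem 3} into an analytic part, producing the bounds $p\le C^{*}t^{2}\log^{2}t$ (for $t>1$) and $p\le C_1$ (for $t=1$), and a short counting part deducing the $o(x/\log x)$ estimate from them.

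\textbf{Reduction.} Fix a prime $p$ with $(b,p)=1$, put $d=\mathrm{ord}_p(b)$, let $H=\langle b\rangle\le(\mathbb Z/p\mathbb Z)^{*}$, and write $t=(p-1)/d$, so $p\in\mathbb P_t(b)$. Applying the $p$-th roots of unity filter to the congruence $k\equiv0\pmod p$ and using the base-$b$ digits of $N$, one obtains, with $\omega=e^{2\pi i/p}$,
\[
S^{(b)}_{p,0}(N)=\frac1p\sum_{j=1}^{p-1}\ \sum_{(\text{digitwise deviations of }N)}\Big(\prod(\text{phase }-\omega^{jb^{l}}\text{ or }1)\Big)\prod_{l}\bigl(1-\omega^{jb^{l}}\bigr),
\]
and in particular $S^{(b)}_{p,0}(b^{dm})=\frac1p\sum_{j=1}^{p-1}W_j^{\,m}$, where $W_j=\prod_{i=0}^{d-1}(1-\omega^{jb^{i}})$ depends only on the coset $jH$. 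There are thus $t$ distinct values $W_{(0)},\dots,W_{(t-1)}$, each occurring $d$ times, with $|W_{(\ell)}|=R_{(\ell)}:=\prod_{h\in(\text{coset }\ell)}2\sin(\pi\lVert h\rVert/p)$ and $\prod_{\ell}R_{(\ell)}=p$ by the identity $\prod_{a=1}^{p-1}2\sin(\pi a/p)=p$. Equivalently, the vector $v^{(m)}\in\mathbb Z^{p}$ of signed residue counts of $A_b\cap[0,b^{m})$ satisfies $v^{(m+d)}=\mathcal M v^{(m)}$ with $\mathcal M=\prod_{i=0}^{d-1}(I-P^{b^{i}})$ a circulant matrix ($P=$ cyclic shift mod $p$) whose eigenvalues are the $W_j$ together with $0$.

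\textbf{Necessary conditions and the bound.} First I would show that $S^{(b)}_{p,0}(N)>0$ for all large $N$ forces $-1\in H$, i.e.\ $d$ even and $b^{d/2}\equiv-1\pmod p$: otherwise the cosets with $R_{(\ell)}$ maximal form a genuinely non-real conjugate pair, so $S^{(b)}_{p,0}(b^{dm})=\frac{2d}{p}\,\Lambda^{m}\cos(m\theta)+o(\Lambda^{m})$ with $\Lambda:=\max_\ell R_{(\ell)}$ and $\theta\neq0$, which is negative for infinitely many $m$. Granting $-1\in H$ (so all $W_{(\ell)}=R_{(\ell)}>0$), a similar argument along subsequences $N=b^{dm}+b^{dm-1}+\cdots$ forces $\Lambda$ to be attained by a \emph{unique} coset $\ell_0$; and evaluating $S^{(b)}_{p,0}$ at integers whose base-$b$ digit strings are $0/1$-words with several $1$'s yields exact identities $S^{(b)}_{p,0}(N)=\sum_{l}(-1)^{l-1}v^{(i_l)}_{-\sigma_{l-1}}$ with $\sigma_{l-1}$ running over partial sums of distinct powers of $b$. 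Passing to the limit $m\to\infty$ via $v^{(m)}=\Lambda^{m/d}(c\,\eta+o(1))$ with $c>0$ and $\eta_r=\frac1p\sum_{j\in\ell_0}\omega^{jr}$ a Gauss-period vector ($\eta_0=d/p$), positivity of all these quantities becomes a finite system of inequalities on the partial products $\sum_{j\in\ell_0}\prod_{i<k}(1-\omega^{jb^{i}})$ together with a quantitative dominance requirement, which forces $\log\Lambda$ to exceed a positive quantity increasing with $d$. On the other hand $\prod_{\ell}R_{(\ell)}=p$ combined with an equidistribution estimate for cosets of $H$ (no coset of $H$ can be so biased toward $p/2$ that its $R_{(\ell)}$ is much larger than $p^{1/t}$) bounds $\log\Lambda$ from above in a range governed by $d$; balancing the two inequalities and using $d=(p-1)/t$ yields $p\le C^{*}t^{2}\log^{2}t$ for $t\ge2$, the case $t=1$ (where $d=p-1$, $H=(\mathbb Z/p\mathbb Z)^{*}$, $\Lambda=p$) being handled identically and failing for $p>C_1(b)$. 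I expect this last step — extracting the explicit exponent $t^{2}\log^{2}t$ from the positivity inequalities and the subgroup-distribution input — to be the main technical obstacle.

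\textbf{Counting.} By part one, every prime with the Newman property satisfies $p\le C^{*}i_p^{2}\log^{2}i_p$ (when $i_p\ge2$) or $p\le C_1$ (when $i_p=1$), where $i_p:=(p-1)/\mathrm{ord}_p(b)$; hence for every fixed integer $k_0$, every sufficiently large Newman prime has $i_p>k_0$, so
\[
\#\{\,p\le x:\ S^{(b)}_{p,0}(n)>0\text{ for large }n\,\}\ \le\ \#\{\,p\le x:\ i_p>k_0\,\}\ +\ O_{k_0}(1).
\]
By the classical description of the distribution of the index (for each fixed $k$ the set $\{p:i_p=k\}$ has a density $\delta_k$ obtainable by inclusion–exclusion and the Chebotarev density theorem over the fields $\mathbb Q(\zeta_{m},b^{1/m})$, and $\sum_{k\ge1}\delta_k=1$), one has $\#\{p\le x:i_p>k_0\}=\bigl(1-\sum_{k\le k_0}\delta_k+o(1)\bigr)\pi(x)$ as $x\to\infty$. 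Therefore $\limsup_{x\to\infty}\#\{p\le x:S^{(b)}_{p,0}(n)>0\text{ for large }n\}/\pi(x)\le 1-\sum_{k\le k_0}\delta_k$ for every $k_0$, and letting $k_0\to\infty$ gives $\#\{p\le x:S^{(b)}_{p,0}(n)>0\text{ for large }n\}=o(\pi(x))=o(x/\log x)$, as claimed. The only delicate point here is uniformity in the tail of the inclusion–exclusion, handled as usual by truncating and bounding the contribution of large indices (equivalently, of primes with unusually small multiplicative order).
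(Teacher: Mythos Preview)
Your counting argument at the end is essentially fine (the paper invokes an elementary lemma of Erd\H{o}s to the effect that $\#\{p\le x:\mathrm{ord}_p(b)\le p^{1/2+\varepsilon_p}\}=o(x/\log x)$, which is equivalent to your $\sum_k\delta_k=1$), but the analytic part has a genuine gap. You correctly identify $S^{(b)}_{p,0}(b^{dm})=\tfrac{d}{p}\sum_\ell W_{(\ell)}^{m}$ with $W_{(\ell)}$ the coset products, and the observation that positivity forces $-1\in H$ (so each $W_{(\ell)}\ge0$) and then uniqueness of the dominant coset is sensible. But from there your plan dissolves: the assertion that positivity at various $0/1$-digit $N$ ``forces $\log\Lambda$ to exceed a positive quantity increasing with $d$'', and that an equidistribution estimate then caps $\log\Lambda$ from above, is not substantiated. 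The positivity conditions you write down constrain \emph{signs} of Gauss-period-type sums $\sum_{j\in\ell_0}\prod_{i<k}(1-\omega^{jb^i})$ and the \emph{ratio} of $\Lambda$ to subdominant $R_{(\ell)}$'s, not $\log\Lambda$ itself, and you do not explain how ``balancing'' these yields the explicit exponent $t^{2}\log^{2}t$. You yourself flag this step as the main obstacle, and as written there is no mechanism that produces it.

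The paper's route is quite different and much more direct: instead of an asymptotic spectral analysis, it evaluates $S^{(b)}_{p,0}$ at the \emph{specific} test points $N=b^{4ks-2}$ (here $s=\mathrm{ord}_p(b)$). From Lemma~\ref{Lemma 2} one gets
\[
S^{(b)}_{p,0}(b^{4ks-2})=\frac1p\sum_{r=1}^{t}\Bigl(\prod_{j=0}^{s-1}(1-\zeta_p^{2mb^{j}})\Bigr)^{4k}\sum_{l\in L_r}\frac{1}{(1-\zeta_p^{l})(1-\zeta_p^{lb})},
\]
where the fourth-power product is \emph{nonnegative real} for every coset. A short trig calculation gives $\mathrm{Re}\sum_{l\in L_r}\frac{1}{(1-\zeta_p^{l})(1-\zeta_p^{lb})}=\frac{s}{4}-\sum_{l\in L_r}\frac{1}{4\tan(\pi lb/p)\tan(\pi l/p)}$, and the whole proof reduces to showing this is \emph{negative} for every coset once $p$ is large. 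The key technical lemma then bounds the cotangent sum from below: using P\'olya--Vinogradov one shows each coset $L_r$ contains at least $\sqrt{p}\log p$ elements in $(0,2t\sqrt{p}\log p]$, and those small $l$ contribute $\asymp p^{2}/l^{2}$ each, giving $\sum_{l\in L_r}\frac{1}{4\tan(\pi lb/p)\tan(\pi l/p)}\ge C_1(b)\,p^{3/2}/(t^{2}\log p)-C_2(b)s$. Hence if $p>C'(t\log p)^{2}$ the real part is negative, so $S^{(b)}_{p,0}(b^{4ks-2})<0$ and the Newman property fails; unwinding gives $p\le C^{*}t^{2}\log^{2}t$. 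The missing ingredients in your sketch are precisely (i) the clever choice of test point making every coset contribute with a nonnegative weight, and (ii) the P\'olya--Vinogradov input that turns ``each coset has many small elements'' into the quantitative $p^{3/2}/(t^2\log p)$ lower bound.
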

   Theorem 1.3 is a generalization of Theorem 2 of \cite{H}. 
\section{Proof of Theorem \ref{Theorem 1}}
If $b$ is an odd number as $q$ is divisible by $b+1$, $q$ will be an
 even integer and all the integers satisfying the congruences $n\equiv v(b + 1)
\mod q$, $n\equiv v(b + 1) + 1 \mod q$ and $n\equiv v(b + 1)-1 \mod q$ are even, odd and odd
respectively. Also  $n\equiv s_b(n) \mod 2,$ where $s_b(n)$ denotes sum of digits in b-ary expansion. Hence all the numbers
satisfying the congruences $n\equiv v(b + 1) \mod q$, $n\equiv v(b + 1) + 1 \mod q$ and $n\equiv
v(b + 1)\equiv 1 \mod q$ will have even b-ary digit sum, odd b-ary digit sum and odd
b-ary digit sum respectively so (1),(2) and (3) of Theorem \ref{Theorem 1} are trivially true
when $b > 1$ is an odd number. Hence we can assume that $b$ is even.
We prove six lemmas in order to prove Theorem \ref{Theorem 1}.
\begin{Lemma 1}\label{lemma1}
If $N<b^{k}$, \[
S^{(b)}_{q,i}(b^{k}+N)=S^{(b)}_{q,i}(b^{k})-S^{(b)}_{q,i-b^{k}}(N).\]
\end{Lemma 1}
\begin{proof}
Note that if $n<b^k$ then $s_{b}(n+b^k)=s_{b}(n)+1$. Thus, we have
\begin{align*}
S^{(b)}_{q,i}(b^{k}+n)&=\sum_{\substack{n\in A_{b}\\0\leq n<N+b^k\\n\equiv i \mod q}}(-1)^{s_{b}(n)}\\
&=\sum_{\substack{n\in A_{b}\\0\leq n<b^k\\n\equiv i \mod q}}(-1)^{s_{b}(n)}+\sum_{\substack{n\in A_{b}\\0\leq n<N\\n\equiv i-b^k \mod q}}(-1)^{s_{b}(n+b^k)}\\
&=\sum_{\substack{n\in A_{b}\\0\leq n<b^k\\n\equiv i \mod q}}(-1)^{s_{b}(n)}-\sum_{\substack{n\in A_{b}\\0\leq n<N\\n\equiv i-b^k \mod q}}(-1)^{s_{b}(n)}\\
&=S^{(b)}_{q,i}(b^{k})-S^{(b)}_{q,i-b^{k}}(N).
\end{align*}
\end{proof}

From Lemma \ref{lemma1} it follows that if $ N=b^{k_{1}}+\cdots+b^{k_{r}}$ and $b^{k_1}>\cdots >b^{k_r}$ then 
\begin{equation}\label{equation 1}
S^{(b)}_{q,i}(N)=S^{(b)}_{q,i}(b^{k_{1}})-S^{(b)}_{q,i-b^{k_{1}}}(b^{k_{2}})-\cdots+(-1)^{r-1}S^{(b)}_{q,i-b^{k_{1}}-\cdots-b^{k_{r-1}}}(-1,b^{k_{r}}).
\end{equation}
\begin{Lemma 2}\label{Lemma 2}
If $k$ is a natural number then
\[S^{(b)}_{q,i}(b^{k})=\frac{1}{q}\sum\limits_{m=0}^{q-1}\zeta_{q}^{-im}\prod\limits_{p=0}^{k-1}(1-\zeta_{q}^{mb^{p}}).\]
\end{Lemma 2}
\begin{proof}
If $k$ is a natural number, we have
\begin{align*}
S^{(b)}_{q,i}(b^{k})&=\sum_{\substack{n\in A_{b}\\0\leq n<b^k\\n\equiv i \mod q}}(-1)^{s_{b}(n)}\\
&=\frac{1}{q}\sum_{\substack{n\in A_b\\0\leq n<b^k}}\sum_{m=0}^{q-1}(-1)^{s_b(n)}\zeta_q^{(n-i)m}\\
&=\frac{1}{q}\sum_{0\leq\epsilon_0,\cdots,\epsilon_{k-1}\leq 1}\sum_{m=0}^{q-1}(-1)^{\epsilon_0+\cdots+\epsilon_{k-1}}\zeta_q^{(\epsilon_0+\epsilon_1b+\cdots+\epsilon_{k-1}b^{k-1}-i)m}\\
&=\frac{1}{q}\sum\limits_{m=0}^{q-1}\zeta_{q}^{-im}\prod\limits_{p=0}^{k-1}(1-\zeta_{q}^{mb^{p}}).
\end{align*}
\end{proof}

\begin{Lemma 4}\label{Lemma 4}
Let $b=2l$. If $|z|=1$ and $|1-z|>2\sin \frac{\pi l}{2l+1}$ , then \begin{equation*}
|1-z||1-z^{b}|<4\sin^{2}\frac{\pi l}{2l+1}.
\end{equation*} 
\end{Lemma 4}
\begin{proof}
Let $z$ be $e^{i\theta}$, where $\theta\in [0,2\pi]$. As $|1-z|>2\sin \frac{\pi l}{2l+1}$ we have $\frac{2\pi l}{2l+1}<\theta<\frac{2\pi(l+1)}{2l+1}.$  Let  \begin{equation*}f(\theta):=4|\sin l\theta \sin \frac{\theta}{2}|=|1-z||1-z^{b}|.\end{equation*} If $\frac{2\pi l}{2l+1}<\theta<\frac{2\pi(l+1)}{2l+1}$, it can be seen that $|tan l\theta|\leq |tan \frac{\pi l}{2l+1}|\leq |\tan \frac{\theta}{2}|.$

 We have  \begin{align*}
  & f(\theta)=4\delta \sin l\theta \sin \frac{\theta}{2}, \\
  & f'(\theta) =4\delta (l\cos l\theta sin\frac{\theta}{2}+\frac{1}{2}\sin l\theta \cos \frac{\theta}{2} ).
 \end{align*}
 
\begin{center}
    \begin{tabular}{| l | l |l | l | l| l | l | l|}
    \hline
     & $\theta$& $\sin l\theta$ & $\cos l\theta $&$\sin \frac{\theta}{2}$ & $\cos \frac{\theta}{2}$ & $\delta$ &$f'(\theta)$ \\ \hline
    $l=2m$ & $\frac{2l\pi}{2l+1}<\theta\leq \pi$  & $\leq 0$&$\geq 0$& $>0$& $>0$& $-1$& $<0$
     \\ \hline
     $l=2m$& $\pi<\theta<\frac{(2l+2)\pi}{2l+1}$  & $>0$& $>0$& $>0$ & $<0$ & $+1$& $>0$ \\ \hline
    $l=2m+1$ &$\frac{2l\pi}{2l+1}<\theta\leq \pi$  & $>0$ &$<0$ &$>0$ &$>0$ &$+1$ &$<0$ \\
    \hline
    $l=2m+1$& $\pi <\theta<\frac{(2l+2)\pi}{2l+1}$ & $<0$ & $<0$&$>0$ &$<0$ &$-1$ &$>0$ \\
    \hline
    \end{tabular}
\end{center}
So, in any interval 
\begin{equation*}
      f(\theta)<f(\pi \pm \frac{\pi}{2l+1})=4\sin^2\frac{\pi l}{2l+1}. 
\end{equation*}
\end{proof}
Let $s$ be order of $b$ in the multiplicative group $(\mathbb{Z}/q\mathbb{Z})^*$.
\begin{four}\label{Lemma 6}
If $(q,b)=1$, $(b+1)|q$ and  $b$ is even, $m\equiv  \frac{\pm ql}{2l+1} \mod q$ then \[\left|\prod_{p=0}^{s-1}(1-\zeta_q^{b^{p}m})\right|=\left(2\sin\frac{\pi l}{2l+1}\right)^s,\] and if  $m\not\equiv \frac{\pm ql}{2l+1} (\mod q)$ then \[\left|\prod_{p=0}^{s-1}(1-\zeta_q^{b^{p}m})\right|<\left(2\sin\frac{\pi l}{2l+1}\right)^s.\]
\end{four}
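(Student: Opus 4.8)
The plan is to track the orbit of $m$ under multiplication by $b$ modulo $q$ on the unit circle. Write $\zeta=\zeta_{q}$, set $z=\zeta^{m}$ and $w_{p}=\zeta^{b^{p}m}=z^{b^{p}}$ for $0\le p\le s$. Since $b^{s}\equiv 1\pmod q$ we have $w_{s}=w_{0}$, and $w_{p+1}=w_{p}^{b}$ for every $p$; also $|w_{p}|=1$. Put $\rho=2\sin\frac{\pi l}{2l+1}$, so that the object of interest is $\prod_{p=0}^{s-1}|1-w_{p}|$ and the target upper bound is $\rho^{s}$. Because $w_{p+1}=w_{p}^{b}$, Lemma~\ref{Lemma 4} applies verbatim to each consecutive pair $(w_{p},w_{p+1})$: whenever $|1-w_{p}|>\rho$ one has $|1-w_{p}|\,|1-w_{p+1}|<\rho^{2}$ (recall $4\sin^{2}\frac{\pi l}{2l+1}=\rho^{2}$).

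Next I would split the cyclic index set $\mathbb{Z}/s\mathbb{Z}$ into the ``big'' set $B=\{p:\ |1-w_{p}|>\rho\}$ and its complement $G=\{p:\ |1-w_{p}|\le\rho\}$. The combinatorial heart of the argument is that $B$ contains no two consecutive indices: if $p,p+1\in B$ then $|1-w_{p}|\,|1-w_{p+1}|>\rho^{2}$, contradicting the estimate of Lemma~\ref{Lemma 4} applied at $p$. Hence the pairs $\{p,p+1\}$ with $p\in B$ are pairwise disjoint, together they cover exactly $2|B|$ indices, the remaining $s-2|B|$ indices all lie in $G$, and
\[
\prod_{p=0}^{s-1}|1-w_{p}|=\prod_{p\in B}\bigl(|1-w_{p}|\,|1-w_{p+1}|\bigr)\ \prod_{j\ \text{unpaired}}|1-w_{j}|\ \le\ \rho^{2|B|}\cdot\rho^{s-2|B|}=\rho^{s},
\]
the inequality being strict as soon as $B\neq\emptyset$, since then at least one paired factor is $<\rho^{2}$.

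Finally I would treat the equality case. Equality in the displayed bound forces $B=\emptyset$ together with $|1-w_{p}|=\rho$ for every $p$; in particular $|1-\zeta^{m}|=\rho=|1-\zeta^{ql/(2l+1)}|$. Two unit-modulus points $e^{i\alpha},e^{i\beta}$ are equidistant from $1$ precisely when $\alpha\equiv\pm\beta\pmod{2\pi}$, so this yields $m\equiv\pm\,ql/(2l+1)\pmod q$. Conversely, if $m\equiv\pm\,ql/(2l+1)\pmod q$ then $z=\zeta^{m}=e^{\pm 2\pi i l/(2l+1)}$ is a primitive $(2l+1)$-th root of unity (as $(l,2l+1)=1$); using $b=2l\equiv-1\pmod{2l+1}$ and $z^{2l+1}=1$ we get $w_{1}=z^{b}=z^{-1}$, $w_{2}=z$, and so on, so every $w_{p}$ equals $z$ or $z^{-1}$ and contributes exactly $|1-w_{p}|=\rho$, giving $\prod_{p=0}^{s-1}|1-w_{p}|=\rho^{s}$. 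Combining the two directions: the product equals $\rho^{s}$ when $m\equiv\pm ql/(2l+1)\pmod q$ and is strictly smaller otherwise (if $B=\emptyset$ but not all factors equal $\rho$, the product is $<\rho^{s}$ directly; if $B\neq\emptyset$, by the paragraph above). The one genuinely delicate point is the non-adjacency of $B$, which is exactly what turns the product into a clean telescoping bound; the degenerate case $m\equiv 0\pmod q$ (all $w_{p}=1$, product $0<\rho^{s}$, and indeed $0\not\equiv\pm ql/(2l+1)$) is consistent and needs no separate treatment.
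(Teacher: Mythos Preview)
Your proof is correct and follows essentially the same approach as the paper: partition the cyclic index set according to whether $|1-w_p|$ exceeds $\rho$, invoke Lemma~\ref{Lemma 4} to see that the ``big'' set has no two consecutive elements, pair each big index with its successor, and bound the product factor by factor. Your sets $B$ and $G$ correspond to the paper's $S_1$ and $S_2\cup S_3$, and your treatment of the equality case (showing that a single factor equal to $\rho$ already forces $m\equiv\pm ql/(2l+1)$, and conversely that the orbit then alternates between $z$ and $\bar z$) is slightly more explicit than the paper's but amounts to the same observation.
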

\begin{proof}
Observe that $|1-\zeta_q^{mb^p}|=2\sin \frac{\pi l}{2l+1}$ for $0\leq p \leq s-1$ if and only if $m\equiv \frac{\pm ql}{2l+1} \mod q$. Hence \[\left|\prod_{p=0}^{s-1}(1-\zeta_q^{b^{p}m})\right|=\left(2\sin\frac{\pi l}{2l+1}\right)^s\] when $m\equiv \frac{\pm ql}{2l+1} \mod q$. If $m\not\equiv \frac{\pm ql}{2l+1} (\mod q)$ then $|1-\zeta_q^{mb^i}|\neq 2\sin \frac{\pi l}{2l+1}$ for $0\leq i \leq s-1$. Let $S_1$,$S_2$ and $S_3$ be subsets of $\{0,1,\cdots,s-1\}$. $S_1$ contains all the $p$ such that $|1-\zeta_q^{mb^p}|>2\sin \frac{\pi l}{2l+1}$, $S_2$ all $p$ such that $p-1(\mod s)\in S_1$ and $S_3$ contains the remaining elements of $\{0,1,\cdots,s-1\}$. Clearly, from Lemma \ref{Lemma 4} $S_1\cap S_2=\phi$ and if $p\in S_1$ then   $|(1-\zeta_q^{b^{p}m})(1-\zeta_q^{b^{p+1}m})|<(2\sin \frac{\pi l}{2l+1})^2$.
Therefore 
\begin{align*}
\left|\prod_{p=0}^{s-1}(1-\zeta_q^{b^{p}m})\right|&=\prod_{p\in S_1}|(1-\zeta_q^{b^{p}m})(1-\zeta_q^{b^{p+1}m})|\prod_{p\in S_3}|(1-\zeta_q^{b^{p}m})|\\
&<\left(2\sin\frac{\pi l}{2l+1}\right)^s.
\end{align*}
\end{proof}
Let $\gamma=max\{\left|\prod_{p=0}^{s-1}(1-\zeta_q^{b^pm})\right|: m \not \equiv \pm \frac{ql}{2l+1}\mod q\}$. From Lemma \ref{Lemma 6} we have $\gamma<\left(2\sin \frac{\pi l}{2l+1}\right)^s$.

\begin{abc}\label{Lemma 9}
\[
S^{(b)}_{q,i}(b^{k})=\begin{cases}
 \frac{2}{q}(\cos\frac{2\pi il}{2l+1})\left(2\sin \frac{\pi l}{2l+1}\right)^{k}+O(\gamma^{\frac{k}{s}})&  k ~ \text{even}
   \\ \frac{2}{q}\left(\cos\frac{2\pi il}{2l+1}-\cos\frac{2\pi(i-1)l}{2l+1}\right)\left(2\sin \frac{\pi l}{2l+1}\right)^{k-1}+O(\gamma^{\frac{k}{s}}) & k ~ \text{odd.} 
   \end{cases}
\]
\end{abc}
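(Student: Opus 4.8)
The plan is to start from the closed form of Lemma~\ref{Lemma 2},
\[
S^{(b)}_{q,i}(b^{k})=\frac{1}{q}\sum_{m=0}^{q-1}\zeta_{q}^{-im}\prod_{p=0}^{k-1}\bigl(1-\zeta_{q}^{mb^{p}}\bigr),
\]
and to split the sum over $m$ into three parts: the term $m=0$; the two ``resonant'' residues $m\equiv\pm m_{0}\pmod q$, where $m_{0}:=\tfrac{ql}{2l+1}$ is an integer because $(b+1)\mid q$ and satisfies $m_{0}\not\equiv-m_{0}\pmod q$ since $l\geq1$; and all remaining residues. The term $m=0$ contributes $0$ because $\prod_{p=0}^{k-1}(1-1)=0$ for $k\geq1$. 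By Lemma~\ref{Lemma 6}, $\pm m_{0}$ are exactly the residues for which every factor attains the maximal modulus $|1-\zeta_{q}^{mb^{p}}|=2\sin\frac{\pi l}{2l+1}$, so I would evaluate those two terms exactly and absorb everything else into the error.

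For a residue $m\not\equiv\pm m_{0}\pmod q$, since $b^{s}\equiv1\pmod q$ the factors $1-\zeta_{q}^{mb^{p}}$ are periodic in $p$ with period $s$; writing $k=s\lfloor k/s\rfloor+r$ with $0\leq r<s$ and using $|1-\zeta_{q}^{mb^{p}}|\leq2$,
\[
\left|\prod_{p=0}^{k-1}\bigl(1-\zeta_{q}^{mb^{p}}\bigr)\right|\leq\left|\prod_{p=0}^{s-1}\bigl(1-\zeta_{q}^{mb^{p}}\bigr)\right|^{\lfloor k/s\rfloor}2^{r}\leq\gamma^{\lfloor k/s\rfloor}2^{s}
\]
by the definition of $\gamma$. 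Since $r$ takes only finitely many values, $\gamma^{\lfloor k/s\rfloor}=\gamma^{-r/s}\gamma^{k/s}=O(\gamma^{k/s})$, so summing the at most $q$ such terms contributes $O(\gamma^{k/s})$, with implied constant depending only on $b$ and $q$. (If there is no such residue, as when $b=2$, $q=3$, this part and $\gamma$ are simply $0$.)

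It then remains to evaluate the two resonant terms. The arithmetic input is $m_{0}(b+1)=ql\equiv0\pmod q$, whence $m_{0}b\equiv-m_{0}$ and, inductively, $m_{0}b^{p}\equiv(-1)^{p}m_{0}\pmod q$; thus $\zeta_{q}^{m_{0}b^{p}}=\omega^{(-1)^{p}}$ with $\omega:=\zeta_{q}^{m_{0}}$, which satisfies $\mathrm{Re}(\omega^{j})=\cos\frac{2\pi jl}{2l+1}$ and $(1-\omega)(1-\overline{\omega})=4\sin^{2}\frac{\pi l}{2l+1}$. Grouping consecutive factors, $\prod_{p=0}^{k-1}(1-\zeta_{q}^{m_{0}b^{p}})$ equals $\bigl(2\sin\tfrac{\pi l}{2l+1}\bigr)^{k}$ when $k$ is even and $\bigl(2\sin\tfrac{\pi l}{2l+1}\bigr)^{k-1}(1-\omega)$ when $k$ is odd. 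The residue $-m_{0}$ yields the complex conjugate of the $m_{0}$ term, so together they contribute $\tfrac{2}{q}\,\mathrm{Re}\bigl(\zeta_{q}^{-im_{0}}\prod_{p=0}^{k-1}(1-\zeta_{q}^{m_{0}b^{p}})\bigr)$: this is $\tfrac{2}{q}\cos\frac{2\pi il}{2l+1}\bigl(2\sin\frac{\pi l}{2l+1}\bigr)^{k}$ for even $k$, and, using $\omega^{-i}(1-\omega)=\omega^{-i}-\omega^{1-i}$, it is $\tfrac{2}{q}\bigl(\cos\frac{2\pi il}{2l+1}-\cos\frac{2\pi(i-1)l}{2l+1}\bigr)\bigl(2\sin\frac{\pi l}{2l+1}\bigr)^{k-1}$ for odd $k$. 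Adding the $O(\gamma^{k/s})$ error gives both cases of the lemma.

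I expect the main difficulty to be bookkeeping rather than conceptual. On the error side one must check that passing from $\gamma^{\lfloor k/s\rfloor}$ to $\gamma^{k/s}$ costs only a bounded factor, so that, together with $\gamma<\bigl(2\sin\frac{\pi l}{2l+1}\bigr)^{s}$ from Lemma~\ref{Lemma 6}, the error is genuinely of smaller order than the displayed main term. On the main-term side one must track, for odd $k$, which of $1-\omega$ and $1-\overline{\omega}$ is the leftover factor after pairing, since it is exactly that factor which turns a single cosine into the difference of cosines.
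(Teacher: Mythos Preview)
Your proposal is correct and follows essentially the same approach as the paper: split the sum from Lemma~\ref{Lemma 2} into the two resonant residues $m\equiv\pm\frac{ql}{2l+1}$, evaluate them exactly via $\zeta_{q}^{m_{0}b^{p}}=\omega^{(-1)^{p}}$ and pair consecutive factors, and bound the remaining terms by periodicity in $p$ to get $O(\gamma^{k/s})$. Your write-up is in fact slightly more careful than the paper's (you separate out $m=0$, verify $m_{0}\not\equiv-m_{0}$, and track which of $1-\omega$ or $1-\overline{\omega}$ survives for odd $k$), but the argument is the same.
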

\begin{proof}
Let $k=k_1s+k_2$ where $0\leq k_2 \leq s-1$.
From Lemma \ref{Lemma 2} we have
 \begin{equation}\label{equation 2}
 S^{(b)}_{q,i}(b^k)=\frac{1}{q}\sum_{m\equiv \frac{\pm ql}{2l+1}}\zeta_q^{-im}\prod_{p=0}^{k-1}\left(1-\zeta_q^{b^pm}\right)+\frac{1}{q}\sum_{m\not\equiv \frac{\pm ql}{2l+1}}\zeta_q^{-im}\prod_{p=0}^{k-1}\left(1-\zeta_q^{b^pm}\right)
 \end{equation}
where the first term of right hand side of equation (\ref{equation 2}) is
\begin{align*}
\frac{1}{q}\sum_{m\equiv \frac{\pm ql}{2l+1}}\zeta_q^{-im}\prod_{p=0}^{k-1}\left(1-\zeta_q^{b^pm}\right)&=\frac{1}{q}\zeta_{b+1}^{-il}\prod_{p=0}^{k-1}\left(1-\zeta_{b+1}^{lb^p}\right)+\frac{1}{q}\zeta_{b+1}^{+il}\prod_{p=0}^{k-1}\left(1-\zeta_{b+1}^{-lb^p}\right).
\end{align*}
Hence

\[
\frac{1}{q}\sum_{m\equiv \frac{\pm ql}{2l+1}}\zeta_q^{-im}\prod_{p=0}^{k-1}\left(1-\zeta_q^{b^pm}\right)=\begin{cases}

 \frac{2}{q}\cos\frac{2\pi il}{2l+1}\left(2\sin \frac{\pi l}{2l+1}\right)^{k}&  k ~ \text{even}
   \\ \frac{2}{q}\left(\cos\frac{2\pi il}{2l+1}-\cos\frac{2\pi(i-1)l}{2l+1}\right)\left(2\sin \frac{\pi l}{2l+1}\right)^{k-1}& k ~ \text{odd.} 
   \end{cases}
\]
The second term of right hand hand side of equation (\ref{equation 2})
\begin{align*}
\left|\frac{1}{q}\sum_{m\not\equiv \frac{\pm ql}{2l+1}}\zeta_q^{-im}\prod_{p=0}^{k-1}\left(1-\zeta_q^{b^pm}\right)\right|=\left|\frac{1}{q}\sum_{m\not\equiv \frac{\pm ql}{2l+1}}\zeta_q^{-im}\left(\prod_{p=0}^{s-1}(1-\zeta_q^{b^pm})\right)^{k_1}\left(\prod_{p=0}^{k_2-1}(1-\zeta_q^{b^pm})\right)\right|.
\end{align*}
Let $C=max\{\left|\prod_{p=0}^{k_2-1}\left(1-\zeta_q^{b^pm}\right)\right|: m\in \mathbb{N},~ 0\leq k_2 \leq s-1 \}$ then we have 
\[\left|\frac{1}{q}\sum_{m\not\equiv \frac{\pm ql}{2l+1}}\zeta_q^{-im}\prod_{p=0}^{k-1}\left(1-\zeta_q^{b^pm}\right)\right|\leq C\gamma^{k_1}=O(\gamma^{\frac{k}{s}})\]
which implies
\[
S^{(b)}_{q,i}(b^{k})=\begin{cases}
 \frac{2}{q}\cos\frac{2\pi il}{2l+1}\left(2\sin \frac{\pi l}{2l+1}\right)^{k}+O(\gamma^{\frac{k}{s}}),&  k ~ \text{even}
   \\ \frac{2}{q}\left(\cos\frac{2\pi il}{2l+1}-\cos\frac{2\pi(i-1)l}{2l+1}\right)\left(2\sin \frac{\pi l}{2l+1}\right)^{k-1}+O(\gamma^{\frac{k}{s}}) & k ~ \text{odd.} 
   \end{cases}
\]. 
\end{proof}

\begin{five}\label{Lemma 5}
 If $b\geq 4$ is even and $b=2l$ then 
 there exists a constant $M>0$ depending upon $b$,$q$ and not depending on $k$ such that
\begin{enumerate}
\item \begin{equation*}
 S_{q,v(b+1)}^{(b)}(b^{k})\geq \frac{1}{q}\left(2\sin \frac{\pi l}{2l+1}\right)^{k+1}-M\gamma^{\frac{k}{s}}
 \end{equation*} 
\item
 \begin{equation*}
S_{q,v(b+1)+1}^{(b)}(b^{k})\leq \frac{2}{q}\cos \frac{2\pi l}{2l+1}\left(2\sin \frac{\pi l}{2l+1}\right)^{k}+M\gamma^{\frac{k}{s}}
\end{equation*}
\item
\begin{equation*}
S_{q,v(b+1)-1}^{(b)}(b^{k})\leq \frac{2}{q}\left(\cos \frac{2\pi l}{2l+1}-\cos \frac{4\pi l}{2l+1}\right)
\left(2\sin \frac{\pi l}{2l+1}\right)^{k-1}+M\gamma^{\frac{k}{s}}
\end{equation*}
\item
\begin{equation*}
\left|S_{q,i}^{(b)}(b^{k})\right|\leq \frac{2}{q}\left(2\sin\frac {\pi l}{2l+1}\right)^{k}+M\gamma^{\frac{k}{s}}
\end{equation*}
\item
\begin{equation*}
S_{q,v(b+1)\pm 1}^{(b)}(b^{k})\leq M\gamma^{\frac{k}{s}}
\end{equation*}
\item
\begin{equation*}
S_{q,v(b+1)+0\text{ or }\pm2}^{(b)}(b^{k})\geq -M\gamma^{\frac{k}{s}}
\end{equation*}
\end{enumerate}
\end{five}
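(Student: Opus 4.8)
The plan is to read off all six bounds from the asymptotic expansion of Lemma~\ref{Lemma 9}, substituting each relevant residue class and then estimating the explicit main term. Write $t=2\sin\frac{\pi l}{2l+1}$. Because $b+1=2l+1$ divides $q$ and $v(b+1)$ is a multiple of $b+1$, for every integer $j$
\[
\cos\frac{2\pi\,(v(b+1)+j)\,l}{2l+1}=\cos\!\Big(2\pi v l+\frac{2\pi j l}{2l+1}\Big)=\cos\frac{2\pi j l}{2l+1},
\]
so I abbreviate $c_j=\cos\frac{2\pi j l}{2l+1}$; thus $c_0=1$, $c_{-j}=c_j$, and $c_3=-\cos\frac{3\pi}{2l+1}$. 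For $l\ge 2$ I will repeatedly use: $1<t<2$; $c_1=-\cos\frac{\pi}{2l+1}<0$; $c_2=\cos\frac{2\pi}{2l+1}>0$; the half-angle identity $1-c_1=2\sin^{2}\frac{\pi l}{2l+1}=t^{2}/2$; and $(t-2)(t+1)<0$. Let $C_0>0$, depending only on $b$ and $q$, bound the $O(\gamma^{k/s})$ error of Lemma~\ref{Lemma 9} uniformly over all $k$, both parities of $k$, and the finitely many residues $i\in\{v(b+1),\,v(b+1)\pm1,\,v(b+1)\pm2\}$; then $M$ will be $C_0$ times a fixed finite factor and still depend only on $b,q$.

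Parts (1), (2), (5), (6) reduce to determining the sign of the main term. For $i=v(b+1)$ the main term is $\frac2q t^{k}$ ($k$ even) and $\frac2q(1-c_1)t^{k-1}=\frac1q t^{k+1}$ ($k$ odd); since $\frac2q t^{k}-\frac1q t^{k+1}=\frac1q t^{k}(2-t)>0$, it is always $\ge\frac1q t^{k+1}\ge0$, which gives (1) after absorbing the error into $-M\gamma^{k/s}$ and also the lower bound $\ge-M\gamma^{k/s}$ required in (6). For $i=v(b+1)+1$ the main term is $\frac2q c_1 t^{k}$ ($k$ even, already the target of (2)) and $\frac2q(c_1-1)t^{k-1}=-\frac1q t^{k+1}$ ($k$ odd), and $-\frac1q t^{k+1}\le\frac2q c_1 t^{k}$ is exactly $(t-2)(t+1)\le0$; together with $i=v(b+1)-1$, whose main terms $\frac2q c_1 t^{k}$ and $\frac2q(c_1-c_2)t^{k-1}$ are nonpositive since $c_1<0<c_2$, this gives (2) and the upper bounds $\le M\gamma^{k/s}$ in (5). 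For the residues $i=v(b+1)\pm2$ appearing in (6), the main terms are $\frac2q c_2 t^{k}$ ($k$ even) and $\frac2q(c_2-c_1)t^{k-1}$ or $\frac2q(c_2-c_3)t^{k-1}$ ($k$ odd); all are nonnegative once one checks $c_2-c_3=\cos\frac{2\pi}{2l+1}+\cos\frac{3\pi}{2l+1}=2\cos\frac{5\pi}{2(2l+1)}\cos\frac{\pi}{2(2l+1)}\ge0$, valid for $l\ge2$.

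Parts (3) and (4) need a genuine trigonometric estimate rather than just a sign. In (3), for $k$ odd the main term at $i=v(b+1)-1$ is precisely $\frac2q(c_1-c_2)t^{k-1}$, the stated target; for $k$ even the main term is $\frac2q c_1 t^{k}$, and the required $\frac2q c_1 t^{k}\le\frac2q(c_1-c_2)t^{k-1}$ is $c_1 t\le c_1-c_2$, which with $u=\cos\frac{\pi}{2(2l+1)}$ (so $t=2u$, $c_1=1-2u^{2}$, $c_2=8u^{4}-8u^{2}+1$) becomes $c_1 t-(c_1-c_2)=2u(u-1)(4u^{2}+2u-1)<0$, true since $\tfrac12<u<1$. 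In (4), for $k$ even $|S^{(b)}_{q,i}(b^{k})|\le\frac2q t^{k}+M\gamma^{k/s}$ follows at once from $\bigl|\cos\frac{2\pi i l}{2l+1}\bigr|\le1$; for $k$ odd the main coefficient $\cos\frac{2\pi i l}{2l+1}-\cos\frac{2\pi(i-1)l}{2l+1}=-2\sin\frac{(2i-1)\pi l}{2l+1}\sin\frac{\pi l}{2l+1}$ has absolute value at most $2\sin\frac{\pi l}{2l+1}=t$, so $|S^{(b)}_{q,i}(b^{k})|\le\frac2q t\cdot t^{k-1}+M\gamma^{k/s}=\frac2q t^{k}+M\gamma^{k/s}$.

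The genuinely non-mechanical ingredients are thus a short list of inequalities valid for $l\ge2$: $(t-2)(t+1)\le0$ for (2) and (5), $2u(u-1)(4u^{2}+2u-1)\le0$ for (3), $c_1<0<c_2$ for (5), $c_2-c_3\ge0$ for (6), and the sum-to-product identity in (4) that upgrades the useless bound $|\cos A-\cos B|\le2$ (useless because $t<2$) to the sharp bound $t$, exploiting that the angle difference $\tfrac{2\pi l}{2l+1}$ is fixed. Everything else is bookkeeping --- evaluating each cosine at $v(b+1)+j$ and inflating $M$ to swallow finitely many implied constants and positive gap terms. I expect the only real friction to be keeping the even/odd split for $k$ straight simultaneously across all six parts, not any individual estimate.
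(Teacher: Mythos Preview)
Your proof is correct and follows essentially the same route as the paper: substitute the relevant residues into Lemma~\ref{Lemma 9}, then verify in each case the trigonometric inequality that compares the even-$k$ and odd-$k$ main terms. Your list of needed inequalities ($t<2$, $2c_1\ge -t$, $c_1t\le c_1-c_2$, $|c_i-c_{i-1}|\le t$, $c_1<0<c_2$, $c_2-c_3\ge0$) is precisely the list the paper states, and where the paper merely asserts them you supply short verifications (the factorization $2u(u-1)(4u^2+2u-1)$ for (3), the sum-to-product identity for (4) and (6)). One minor remark: in (4) the residue $i$ is arbitrary, so your choice of $C_0$ should be uniform over all $i$, not just the five listed; but this is automatic since the error bound in the proof of Lemma~\ref{Lemma 9} does not depend on $i$.
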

\begin{proof}
From Lemma \ref{Lemma 9} we have
\begin{align*}
& S^{(b)}_{q,i}(b^{k})=\begin{cases}
\frac{2}{q}(\cos\frac{2\pi il}{2l+1})\left(2\sin \frac{\pi l}{2l+1}\right)^{k}+O(\gamma^{\frac{k}{s}})  & k ~~\text{even}
   \\ \frac{2}{q}\left(\cos\frac{2\pi il}{2l+1}-\cos\frac{2\pi(i-1)l}{2l+1}\right)\left(2\sin \frac{\pi l}{2l+1}\right)^{k-1}+O(\gamma^{\frac{k}{s}}) & k ~~\text{odd}.
   \end{cases} \\\text{Hence for} & ~ i=v(b+1),~v(b+1)+1~\text{and} ~ v(b+1)-1  \\& S_{q,v(b+1)}^{(b)}(b^{k})=
\begin{cases}
 \frac{2}{q}\left(2\sin \frac{\pi l}{2l+1}\right)^k+O(\gamma^{\frac{k}{s}}) ~  &
  k ~ \text{even} 
   \\\frac{1}{q}\left(2\sin \frac{\pi l}{2l+1}\right)^{k+1}+O(\gamma^{\frac{k}{s}}) & k ~ \text{odd}
   \end{cases}\\   & S_{q,v(b+1)+1}^{(b)}(b^{k})=\begin{cases}
    \frac{2}{q}\cos\frac{2\pi l}{2l+1}\left(2\sin \frac{\pi l}{2l+1}\right)^k+O(\gamma^{\frac{k}{s}}) ~ & k ~ \text{even} 
      \\-\frac{1}{q}\left(2\sin \frac{\pi l}{2l+1}\right)^{k+1}+O(\gamma^{\frac{k}{s}}) ~ & k ~ \text{odd} \\
   \end{cases}\\  & S_{q,v(b+1)-1}^{(b)}(b^{k})=\begin{cases} \frac{2}{q}\cos\frac{2\pi l}{2l+1}\left(2\sin \frac{\pi l}{2l+1}\right)^k+O(\gamma^{\frac{k}{s}})~ & k ~\text{even} 
         \\\frac{2}{q}\left({\cos \frac{2\pi l}{2l+1}-\cos\frac{4\pi l}{2l+1}}\right)\left(2\sin \frac{\pi l}{2l+1}\right)^{k-1}+O(\gamma^{\frac{k}{s}}) & k ~\text{odd}.
      \end{cases}
      \end{align*}
      Results (1),(2),(3) and (4) follow from the inequalities
       \begin{align*}
            & 2\sin\frac{\pi l}{2l+1}\leq 2,\\
            & 2\cos\frac{2\pi l}{2l+1}\geq -2\sin\frac{\pi l}{2l+1},\\
            &\cos \frac{2\pi l}{2l+1}-\cos\frac{4\pi l}{2l+1}\geq 2\cos\frac{2\pi l}{2l+1}\sin\frac{\pi l}{2l+1},\\
            &\left|\cos \frac{2\pi l}{2l+1}\right|\leq 1 \text{ and }\\
            &\left|\cos\frac{2\pi il}{2l+1}-\cos\frac{2\pi(i-1)l}{2l+1}\right|\leq 2\sin\frac{\pi l}{2l+1}.
            \end{align*}
      
      If $l\geq 2$ from the inequalities  $ 
    \cos\frac{2\pi l}{2l+1}<0$  and $\cos\frac{4\pi l}{2l+1}>0$ we have
    \begin{equation*}
   S_{q,v(b+1)\pm 1}^{(b)}(b^{k})\leq M\gamma^{\frac{k}{s}},
   \end{equation*}
   for some $M$.
   Hence (5) is true.
  \begin{equation*}
   S_{q,v(b+1)+2}^{(b)}(b^{k})=\begin{cases} \frac{2}{q}\cos\frac{4\pi l}{2l+1}\left(2\sin \frac{\pi l}{2l+1}\right)^k+O(\gamma^{\frac{k}{s}}) ~ & k ~\text{even} 
            \\ \frac{2}{q}\left(\cos \frac{4\pi l}{2l+1}-\cos\frac{2\pi l}{2l+1}\right)\left(2\sin \frac{\pi l}{2l+1}\right)^{k-1}+O(\gamma^{\frac{k}{s}})~& k~\text{odd}
         \end{cases}
         \end{equation*}
   Hence 
   \begin{equation*}S_{q,v(b+1)+2}^{(b)}(b^{k})\geq -M\gamma^{\frac{k}{s}}. \end{equation*} 
  \begin{equation*}
    S_{q,v(b+1)-2}^{(b)}(b^{k})=\begin{cases} \frac{2}{q}\cos\frac{4\pi l}{2l+1}\left(2\sin \frac{\pi l}{2l+1}\right)^k+O(\gamma^{\frac{k}{s}}) ~ & k ~ \text{even} 
               \\\frac{2}{q}\left(2\sin \frac{\pi l}{2l+1}\right)^{k-1}\left(\cos \frac{4\pi l}{2l+1}-\cos\frac{6\pi l}{2l+1}\right)+O(\gamma^{\frac{k}{s}})~ & k~\text{odd}
            \end{cases}\end{equation*}
   As $\cos \frac{4\pi l}{2l+1}-\cos\frac{6\pi l}{2l+1}\geq 0$ for $l\geq 2$ we have
   \begin{equation*}S_{q,v(b+1)-2}^{(b)}(b^{k})\geq -M\gamma^{\frac{k}{s}}\end{equation*} implying (6).
\end{proof}

Proof of Theorem \ref{Theorem 1}
\begin{proof}
Theorem 1.3 of \cite{F} and Theorem 1 of \cite{H} covers the case $b=2$ so we can assume $b\geq 4$.
From (\ref{equation 1}) if
\begin{equation*}
 N=b^{k_{1}}+\cdots+b^{k_{r}}\end{equation*} where $ k_{1}>\cdots>k_{r}$ we have
\[S^{(b)}_{q,i}(N)=S^{(b)}_{q,i}(b^{k_{1}})-S^{(b)}_{q,i-b^{k_{1}}}(b^{k_{2}})+S^{(b)}_{q,i-b^{k_{1}}-b^{k_{2}}}(b^{k_{3}})-\cdots.\] 
When $i=v(b+1)$ we have
\begin{equation*}S^{(b)}_{q,v(b+1)}(N)=S^{(b)}_{q,v(b+1)}(b^{k_{1}})-S^{(b)}_{q,v(b+1)-b^{k_{1}}}(b^{k_{2}})+S^{(b)}_{q,v(b+1)-b^{k_{1}}-b^{k_{2}}}(b^{k_{3}})-\cdots
\end{equation*} 
As $b^{k}\equiv \pm 1 \mod (b+1)$, from Lemma \ref{Lemma 5} we have 
\begin{align*}
 & S_{q,v(b+1)}^{(b)}(b^{k})\geq \frac{1}{q}\left(2\sin \frac{\pi l}{2l+1}\right)^{k+1}-M\gamma^{\frac{k}{s}},  \\ & S^{(b)}_{q,v(b+1)-b^{k_{1}}}(b^{k_{2}})\leq M\gamma^{\frac{k}{s}}\text{ and }\\& \left|S_{q,i}^{(b)}(b^{k})\right|\leq \frac{2}{q}\left(2\sin \frac{\pi l}{2l+1}\right)^{k}+M\gamma^{\frac{k}{s}}.
 \end{align*}
Let $\beta=2\sin\frac{\pi l}{2l+1}$
then \begin{align*}S^{(b)}_{q,v(b+1)}(N)&\geq \left(1+o(1)\right)\left(\frac{1}{q}\left(2\sin \frac{\pi l}{2l+1}\right)^{k_{1}+1}- \frac{2}{q}\left(2\sin \frac{\pi l}{2l+1}\right)^{k_{1}-2}- \frac{2}{q}\left(2\sin \frac{\pi l}{(2l+1)}\right)^{k_{1}-3}+\cdots\right)
\\&=\left(1+o(1)\right)\left(\frac{\beta^{k_{1}+1}}{q}-\frac{2}{q}\frac{\beta^{k_1-1}}{\beta-1}\right).
\end{align*}
Hence (1) of Theorem \ref{Theorem 1} is true.
\begin{equation*}
S^{(b)}_{q,v(b+1)+1}(N)=S^{(b)}_{q,v(b+1)+1}(b^{k_{1}})-S^{(b)}_{q,v(b+1)+1-b^{k_{1}}}(b^{k_{2}})+S^{(b)}_{q,v(b+1)+1-b^{k_{1}}-b^{k_{2}}}(b^{k_{3}})-\cdots \end{equation*}
From (2) of Lemma \ref{Lemma 5} and from (6) of Lemma \ref{Lemma 5}
\begin{equation*}
S^{(b)}_{q,v(b+1)+1}(b^{k_{1}})\leq \frac{2}{q}\cos \frac{2\pi l}{2l+1}\left(2\sin \frac{\pi l}{2l+1}\right)^{k_{1}}+M\gamma^{\frac{k}{s}}
\end{equation*} and 
\begin{equation*}
S_{q,v(b+1)+1-b^{k_{2}}}^{(b)}(b^{k})\geq -M\gamma^{\frac{k}{s}}. 
\end{equation*}
Therefore
\begin{align*}
S^{(b)}_{q,v(b+1)+1}(N)&\leq \left(1+o(1)\right)\left(\frac{2}{q}\cos \frac{2\pi l}{2l+1}\left(2\sin \frac{\pi l}{2l+1}\right)^{k_{1}}+\frac{2}{q}\left(2\sin \frac{\pi l}{2l+1}\right)^{k_{1}-2}+\frac{2}{q}\left(2\sin \frac{\pi l}{2l+1}\right)^{k_{1}-3}\cdots\right)\\&=\left(1+o(1)\right)\left(\frac{2}{q}\cos\frac{2\pi l}{2l+1}\beta^{k_{1}}+\frac{2\beta^{k_{1}-1}}{q(\beta-1)}\right)\\&=(1+o(1))\left(\frac{2}{q}\beta^{k_{1}}\left(-\cos\frac{\pi}{2l+1}+\frac{1}{\beta(\beta-1)}\right)\right). \end{align*}
Hence (2) of Theorem \ref{Theorem 1} is true for sufficiently large $N$.
\begin{equation*}
S^{(b)}_{q,v(b+1)-1}(N)=S^{(b)}_{q,v(b+1)-1}(b^{k_{1}})-S^{(b)}_{q,v(b+1)-1-b^{k_{1}}}(b^{k_{2}})+S^{(b)}_{q,v(b+1)-1-b^{k_{1}}-b^{k_{2}}}(b^{k_{3}})-\cdots \end{equation*}
From (3) of Lemma \ref{Lemma 5} and (6) of Lemma \ref{Lemma 5}
\begin{equation*}
S^{(b)}_{q,v(b+1)-1}(b^{k_{1}})\leq (1+o(1))\frac{2}{q}\left(\cos \frac{2\pi l}{2l+1}-\cos \frac{4\pi l}{2l+1}\right)\left(2\sin \frac{\pi l}{2l+1}\right)^{k_{1}-1}
\end{equation*} and \begin{equation*}S^{(b)}_{q,v(b+1)-1-b^{k_{1}}}(b^{k_{2}})\geq -M\gamma^{\frac{k}{s}}. 
\end{equation*}
Therefore \begin{align*}
S^{(b)}_{q,v(b+1)-1}(N)&\leq (1+o(1))\left(\frac{2}{q}\left(\cos \frac{2\pi l}{2l+1}-\cos \frac{4\pi l}{2l+1}\right)\left(2\sin \frac{\pi l}{2l+1}\right)^{k_{1}-1}+\frac{2}{q}\left(2\sin \frac{\pi l}{2l+1}\right)^{k_{1}-2}+\cdots\right)\\&=(1+o(1))\frac{2}{q}\beta^{k_{1}-1}\left(\cos\frac{2\pi l}{2l+1}-\cos\frac{4\pi l}{2l+1}+\frac{1}{\beta-1}\right)\leq 0. \end{align*}
Hence (3) of Theorem \ref{Theorem 1} is true.
\end{proof}
\begin{Corollary}
If $b_1$ and $b_2$ are two even numbers and $b_1>b_2$ then \[\lim_{n\rightarrow \infty}\frac{S^{(b_2)}_{b_2+1,0}(b_2(n))}{S^{(b_1)}_{b_1+1,0}(b_1(n))}=0.\]
\end{Corollary}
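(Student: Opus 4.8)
The plan is to pin down the exact order of magnitude of $S^{(b)}_{b+1,0}(b(n))$ as a function of $n$, in a form that makes the dependence on $b$ explicit, and then to exploit that this order is strictly increasing in $b$. Write $b=2l$ and $\beta_b:=2\sin\frac{\pi l}{2l+1}$ (the quantity denoted $\beta$ in the proof of Theorem \ref{Theorem 1}). The key estimate to establish is that $S^{(b)}_{b+1,0}(b(n))$ equals, up to positive constants depending only on $b$, the quantity $\beta_b^{\,k(n)}$, where $k(n):=\lfloor\log_2 n\rfloor$. The starting point is the combinatorial remark that if $n=\sum_{i\in I}2^i$ with $I=\{k_1>k_2>\cdots>k_r\}$ (so that $k_1=k(n)$), then $b(n)=b^{k_1}+b^{k_2}+\cdots+b^{k_r}$: the set $I$ of exponents occurring in the base-$b$ representation of $b(n)$ is the same for every base $b$, and the leading exponent $k(n)$ does not depend on $b$. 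Feeding $N=b(n)$ into (\ref{equation 1}) gives
\[
S^{(b)}_{b+1,0}(b(n))=\sum_{j=1}^{r}(-1)^{j-1}S^{(b)}_{b+1,\,i_j}(b^{k_j}),\qquad i_1=0,\ \ i_j\equiv-(b^{k_1}+\cdots+b^{k_{j-1}})\pmod{b+1},
\]
and since $b\equiv-1\pmod{b+1}$ we have $i_2\equiv\pm1\pmod{b+1}$ in particular.

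For the upper bound I would bound $|S^{(b)}_{b+1,0}(b(n))|\le\sum_{j}|S^{(b)}_{b+1,i_j}(b^{k_j})|$ term by term. When $b\ge4$, part (4) of Lemma \ref{Lemma 5} gives $|S^{(b)}_{b+1,i}(b^{k})|\le\frac{2}{b+1}\beta_b^{k}+M\gamma^{k/s}$, so after summing over $k_j\in\{0,1,\dots,k(n)\}$ and using $\beta_b>1$ together with $\gamma^{1/s}<\beta_b$ (which holds since $\gamma<\beta_b^{s}$ by Lemma \ref{Lemma 6}), one obtains $|S^{(b)}_{b+1,0}(b(n))|\le C_b\,\beta_b^{k(n)}$. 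The base $b=2$ is excluded from Lemma \ref{Lemma 5}, but Lemma \ref{Lemma 2} already suffices there: for every $m\not\equiv0\pmod3$ one has $|1-\zeta_3^{2^p m}|=\sqrt3$ for all $p$, whence $|S^{(2)}_{3,i}(2^{k})|\le\frac23(\sqrt3)^k$ and $|S^{(2)}_{3,0}(n)|\le\frac{2\sqrt3}{3(\sqrt3-1)}(\sqrt3)^{k(n)}$. For the lower bound, which I only need for the denominator base $b_1\ge4$, I would specialize the computation already carried out in the proof of Theorem \ref{Theorem 1}(1) to $q=b+1$, $v=0$, $N=b(n)$: using part (1) of Lemma \ref{Lemma 5} on the $j=1$ term, part (5) on the $j=2$ term (legitimate because $i_2\equiv\pm1\pmod{b+1}$ and $v=0$), and part (4) on the rest, one gets
\[
S^{(b)}_{b+1,0}(b(n))\ \ge\ \frac{\beta_b^{\,k(n)+1}}{b+1}-\frac{2}{b+1}\cdot\frac{\beta_b^{\,k(n)-1}}{\beta_b-1}-o\!\left(\beta_b^{\,k(n)}\right),
\]
whose main term is positive for even $b\ge4$ since $\beta_b^{3}-\beta_b^{2}>2$ there; this is exactly the positivity used to conclude Theorem \ref{Theorem 1}(1), so one gets $S^{(b_1)}_{b_1+1,0}(b_1(n))\ge c_{b_1}\beta_{b_1}^{k(n)}$ for all sufficiently large $n$.

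Finally, combining the two bounds, and using that both numerator and denominator are eventually positive (Theorem \ref{Theorem 1}(1) for $b_1$, and for $b_2$ when $b_2\ge4$; Newman's theorem \cite{A} for $b_2=2$), gives for all large $n$
\[
0<\frac{S^{(b_2)}_{b_2+1,0}(b_2(n))}{S^{(b_1)}_{b_1+1,0}(b_1(n))}\le\frac{C_{b_2}}{c_{b_1}}\left(\frac{\beta_{b_2}}{\beta_{b_1}}\right)^{k(n)}.
\]
Writing $b_i=2l_i$, the function $l\mapsto\frac{l}{2l+1}=\frac12-\frac{1}{2(2l+1)}$ is strictly increasing with values in $(0,\tfrac12)$, so $\frac{\pi l}{2l+1}$ increases strictly inside $(0,\tfrac\pi2)$ and hence $\beta_{2l}=2\sin\frac{\pi l}{2l+1}$ is strictly increasing; as $b_1>b_2$ this yields $\beta_{b_2}<\beta_{b_1}$, and since $k(n)=\lfloor\log_2 n\rfloor\to\infty$ the right-hand side tends to $0$, forcing the ratio to $0$. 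The only genuinely delicate point is uniformity of the two estimates: one must check that the error terms discarded in both bounds are $o(\beta_b^{k(n)})$ uniformly over which binary digits of $n$ equal $1$, which reduces to bounding $\sum_{0\le m\le k(n)}(\gamma^{1/s})^m$ — this is $O\!\big((\gamma^{1/s})^{k(n)}\big)$ when $\gamma^{1/s}\ge1$ and $O(k(n))$ otherwise, hence $o(\beta_b^{k(n)})$ in both cases because $\gamma^{1/s}<\beta_b$.
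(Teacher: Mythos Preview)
Your argument is correct and follows essentially the same route as the paper: establish two-sided bounds $c_1\beta_b^{k(n)}\le S^{(b)}_{b+1,0}(b(n))\le c_2\beta_b^{k(n)}$ via Lemma~\ref{Lemma 5} and the computation in the proof of Theorem~\ref{Theorem 1}(1), then use the strict monotonicity of $\beta_b$ in $b$. The paper's own proof is extremely terse (it merely asserts the two-sided bound and says the corollary follows), whereas you supply the details, in particular the separate treatment of $b_2=2$ and the check that the $\gamma$-error terms are uniformly $o(\beta_b^{k(n)})$.
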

\begin{proof}
Let $n=\sum\limits_{i=0}^{k-1}\epsilon_i2^i$ where $\epsilon_i\in \{0,1\}$.
From Lemma \ref{Lemma 5} and Theorem \ref{Theorem 1} one can prove that,  
for every even $b$ for sufficiently large $n$ there exists constants $c_1>0$ and $c_2>0$ independent of $k$ such that
\begin{equation}\label{equation cor}
c_1\left(2\sin \frac{\pi b}{2b+2}\right)^k<S^{(b)}_{b+1,0}(b(n))<c_2\left(2\sin \frac{\pi b}{2b+2}\right)^k.
\end{equation}
The Corollary follows from (\ref{equation cor}). 
\section{Proof of Theorem \ref{Theorem 2}}
\end{proof}
Proof of Theorem \ref{Theorem 2}
\begin{proof}
If $d$ is even then $b$ is odd and the result is trivial. If $d\geq 3$ is odd and $d|(b+1)$. Let $\phi:A_{d-1}\rightarrow A_{b}$ be a map defined by \begin{equation*}\phi((d-1)^{k_{1}}+\cdots+(d-1)^{k_{r}})=b^{k_{1}}+\cdots+b^{k_{r}} \end{equation*} for $ k_{1}>\cdots>k_{r}$.
It is easy to see that $n\in A_{d-1}\implies n\equiv \phi(n) \mod d.$ 
Hence \begin{equation*}S^{(d-1)}_{d,i}((d-1)^{k_{1}}+\cdots+(d-1)^{k_{r}})=S^{(b)}_{d,i}(b^{k_{1}}+\cdots+b^{k_{r}}).\end{equation*}
From previous theorem for sufficiently large $n$  \begin{equation*}
     S^{(d-1)}_{d,0}(n)> 0,
     S^{(d-1)}_{d,1}(n)< 0 \text{ and } 
     S^{(d-1)}_{d,-1}(n)\leq 0.  \end{equation*}
Hence
\begin{equation*} S^{(b)}_{d,0}(n)>0,S^{(b)}_{d,1}(n)<0 \text{ and }  S^{(b)}_{d,-1}(n)\leq 0. \end{equation*}
\end{proof}
\section{ Proof of Theorem \ref{Theorem 3} }
Proof of Theorem \ref{Theorem 3}
\begin{proof}
The proof will be similar to proof of Theorem 2 in \cite{H} and Theorem 1.8 in \cite{F}. Note that in this proof $b$ need not be even and $b$ need not equal $2l$. Let $s$ be the order of $b$ in $(\mathbb{Z}/p\mathbb{Z})^{*}$ and let $L_{1},\cdots,L_{t}$ be cosets of $\{1,b,\cdots,b^{s-1}\}$. From Lemma \ref{Lemma 2} we have
\begin{equation*}
S_{p,0}^{(b)}(b^{4ks-2})=\frac{1}{p}\sum_{r=1}^{t}\left(\prod\limits_{j=0}^{s-1}(1-\zeta_p^{2mb^j})\right)^{4k}\left(\sum_{l\in L_{r}}\frac{1}{(1-\zeta_{p}^l)(1-\zeta_{p}^{lb})}\right),
\end{equation*}
where $m$ is picked from the set $L_r$. We have 
\begin{align*}
\left(\prod\limits_{j=0}^{s-1}(1-\zeta_p^{2mb^j})\right)^{4k}=\left(\prod_{i=0}^{s-1}\zeta_p^{mb^i}\right)^{4k}\prod\limits_{i=0}^{s-1}\left(\zeta_p^{-mb^i}-\zeta_p^{mb^i}\right)^{4k}\geq 0,
\end{align*}
and
\begin{align*}
Re\left(\sum_{l\in L_{r}}\frac{1}{(1-\zeta_{p}^{l})(1-\zeta_{p}^{lb})}\right)&=\sum_{l\in L_{r}}-\frac{\left(\cos \frac{2\pi lb}{2p} \cos \frac{2\pi l}{2p}-\sin \frac{2\pi lb}{2p} \sin \frac {2\pi l}{2p} \right)}{4\sin \frac{2\pi lb}{2p} \sin  \frac{2\pi l}{2p}}\\
&=-\sum_{l\in L_{r}}\left(-\frac{1}{4}+\frac{1}{4\tan \frac{2\pi lb}{2p}\tan\frac{2\pi l}{2p}}\right)\\
&=\frac{s}{4}-\sum_{l\in L_{r}}\frac{1}{4\tan \frac{\pi lb}{p}\tan\frac{\pi l}{p}}.
\end{align*}
Now the following Lemma will help in completing the proof of Theorem \ref{Theorem 3}.

\begin{Lemma 5}\label{Lemma 8}
Let $L$ be a coset of $\{1,b,\cdots,b^{s-1}\}$ and $p\geq Ct^2(\log p)^2$ then 
\[\sum_{l\in L}\frac{1}{4\tan \frac{\pi lb}{p}\tan \frac{\pi l}{p}}\geq \frac{C_1(b)p^{\frac{3}{2}}}{t^2\log p}-C_2(b)s   \]
for some positive constants $C_1(b)$,$C_2(b)$ and $C$ which only depend on $b$.
\end{Lemma 5}
From Lemma \ref{Lemma 8}
\[ Re\left(\sum_{l\in L_{r}}\frac{1}{(1-\zeta_{p}^{l})(1-\zeta_{p}^{lb})}\right)\leq \frac{-C_1(b)p^{\frac{3}{2}}}{t^2\log p}+\left(C_2(b)+\frac{1}{4}\right)s  \]
Hence if 
\begin{equation*}
p>max\left\{\left(\frac{C_2(b)+\frac{1}{4}}{C_1(b)}\right)^2(t\log p)^2,C(t\log p)^2\right\}\leq C'(t\log p)^2
\end{equation*}
then $S^{(b)}_{p,0}(b^{4ks-2})<0$. Hence if $p\leq C'(t\log p)^2$ and $t>1$ one can prove that there exists a constant $C^*$ such that 
\begin{equation*}
p\leq C^*(t\log t)^2.
\end{equation*}
From the following variation of result of Erd\H{o}s we can prove the second part of the theorem.
 
 \begin{new}
 For every integer $b\geq 2$ and every sequence $\epsilon_{p}\rightarrow 0$ as $p\rightarrow \infty$ we have $\left|\left\{ p\leq x:ord_{p}(b)\leq p^{\frac{1}{2}+\epsilon_{p}}\right \}\right|=o\left(\frac{x}{\log x}\right)$. 
 \end{new}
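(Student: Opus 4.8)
The plan is to follow the argument of Erdős and Murty for the multiplicative order (this is also how the case $b=2$ is handled in \cite{H} and \cite{F}). Write $\mathrm{ind}_p(b):=(p-1)/\mathrm{ord}_p(b)$, so that the hypothesis $\mathrm{ord}_p(b)\le p^{1/2+\epsilon_p}$ is equivalent to $\mathrm{ind}_p(b)\ge p^{1/2-\epsilon_p}-1$. Fix a function $h=h(x)\to\infty$ growing very slowly (say $h=\log\log x$) and split the primes in question according to the size of $d:=\mathrm{ord}_p(b)$.

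First suppose $d\le\sqrt x/h$. Then $p$ is a primitive prime factor of $b^d-1$, so $p\mid\Phi_d(b)$, where $\Phi_d$ is the $d$-th cyclotomic polynomial; all but at most one such prime satisfies $p\equiv1\pmod d$ (hence $p>d$), while $\Phi_d(b)\le(b+1)^{\phi(d)}$, so the number of primes with $\mathrm{ord}_p(b)=d$ is at most $\omega(\Phi_d(b))\le 1+\phi(d)\log(b+1)/\log d$. Summing over $d\le\sqrt x/h$, using $\sum_{d\le Z}\phi(d)\sim(3/\pi^2)Z^2$ and partial summation, gives
\[
\#\{p\le x:\mathrm{ord}_p(b)\le\sqrt x/h\}\ \ll_b\ \frac{x}{h^{2}\log x}+\frac{\sqrt x}{h}\ =\ o\!\left(\frac{x}{\log x}\right),
\]
since $h\to\infty$. (It is essential to use $\phi(d)/\log d$ rather than the cruder $\omega(b^d-1)\le d\log_2 b$; the latter would only let the threshold reach $\sqrt x/\sqrt{\log x}$.)

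Now suppose $\sqrt x/h<d\le p^{1/2+\epsilon_p}$. Since $d>\sqrt x/h$ forces $p>\sqrt x/h$, we have $\epsilon_p\le\delta:=\sup_{p>\sqrt x/h}\epsilon_p\to0$; the two inequalities on $d$ then give $p>x^{1-2\delta}/h^2$ and hence $m:=\mathrm{ind}_p(b)\ge x^{1/2-o(1)}=:\xi$, while $m=(p-1)/d<h\sqrt x$. Thus $m$ lies in a narrow window around $\sqrt x$. Use the elementary fact that any integer $m\ge\xi$ has a divisor $e$ with $\sqrt\xi\le e\le\xi$ or a prime factor $q>\xi$; in either case the corresponding modulus $e$ (or $q$) divides $p-1$, and $b$ is an $e$-th power modulo $p$ (equivalently $p$ splits completely in $\mathbb Q(\zeta_e,b^{1/e})$). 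The key input is the uniform bound
\[
\#\{p\le x:e\mid\mathrm{ind}_p(b)\}\ \ll_b\ \frac{x}{e^{2}\log x}\qquad\text{for }e\ \le\ x^{1/2+o(1)},
\]
which one obtains, following Erdős–Murty, from a large-sieve argument: the congruence $p\equiv1\pmod e$ supplies one factor $1/e$, and the extra $1/e$ comes from the power-residue condition (written, after $p=1+ek$, as $p\mid b^{k}-1$). Granting it, and noting that the relevant moduli satisfy $\sqrt\xi\le e\le h\sqrt x$, the contribution of this second range is
\[
\ll\ \frac{x}{\log x}\sum_{e\ge\sqrt\xi}\frac1{e^{2}}\ \ll\ \frac{x}{\sqrt\xi\,\log x}\ =\ o\!\left(\frac{x}{\log x}\right).
\]
Adding the two ranges (and discarding the $O(1)$ primes with $\epsilon_p$ not yet small) proves the lemma; the dependence of the exponent on $p$ caused no trouble because $\epsilon_p$ was replaced by $\delta=\sup_{p>\sqrt x/h}\epsilon_p\to0$ throughout.

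The main obstacle is the displayed uniform bound on $\#\{p\le x:e\mid\mathrm{ind}_p(b)\}$ for $e$ up to roughly $\sqrt x$. Effective Chebotarev is useless here — the conductor of $\mathbb Q(\zeta_e,b^{1/e})$ is far too large once $e$ exceeds $\sqrt{\log x}$ — and the bare Brun–Titchmarsh bound $\#\{p\le x:p\equiv1\pmod e\}\ll x/(e\log x)$ is, on its own, too weak: without the second factor $1/e$ the sums above are only $\Theta(x/\log x)$. So one must genuinely exploit that $b$ lies in the thin subgroup of $e$-th powers modulo $p$, and controlling the large-sieve error term uniformly as $e$ approaches $\sqrt x$ (which is why it matters that the cyclotomic step above caps $m=\mathrm{ind}_p(b)$ at $h\sqrt x=x^{1/2+o(1)}$) is the technical heart of the proof.
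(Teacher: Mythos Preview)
The paper does not prove this lemma at all; it states it as ``a variation of a result of Erd\H{o}s'' and simply cites \cite{I} (as does the model paper \cite{H}). So there is no proof in the paper to compare against --- you have gone further than the paper by attempting one.

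Your first range ($d\le\sqrt x/h$) is handled correctly via the cyclotomic bound $\omega(\Phi_d(b))\le 1+\phi(d)\log(b+1)/\log d$ and partial summation; this part is fine.

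The second range, however, rests entirely on the uniform estimate
\[
\#\{p\le x:\ e\mid\mathrm{ind}_p(b)\}\ \ll_b\ \frac{x}{e^{2}\log x}\qquad(e\le x^{1/2+o(1)}),
\]
which you yourself flag as the ``main obstacle'' and do not prove. This is a genuine gap, and I do not believe any routine large-sieve argument yields it in this uniformity. The elementary product bound you allude to (write $p=1+ek$, note the $k$'s are distinct and each such $p\mid b^{k}-1$, so $\prod_p p\mid\prod_{k\le x/e}(b^k-1)\le b^{(x/e)^2/2}$, and $p>e$) gives only
\[
\#\{p\le x:\ e\mid\mathrm{ind}_p(b)\}\ \ll\ \frac{(x/e)^2}{\log e},
\]
a full factor of $x$ weaker than what you assert. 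Feeding that into your sum over $e\in[\sqrt\xi,\xi]$ with $\xi=x^{1/2-o(1)}$ produces $x^{7/4+o(1)}/\log x$, not $o(\pi(x))$. Brun--Titchmarsh alone gives $\ll x/(\phi(e)\log x)$, which misses the crucial second factor $1/e$, and effective Chebotarev for $\mathbb Q(\zeta_e,b^{1/e})$ is, as you say, useless at conductor $\approx\sqrt x$. So the argument is incomplete at exactly the point you identify; to make it a proof you would need either a precise reference where this uniform bound is established unconditionally, or a different treatment of the second range that avoids it.
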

 \end{proof}
 Proof of Lemma \ref{Lemma 8} 
 \begin{proof}
 The residues are taken from $-\frac{p}{2}$ and $\frac{p}{2}$. If we partition $L$ into four sets $P_1$, $P_2$, $P_3$ and $P_4$. $P_1$ contains all $l\in L$ satisfying $|l|\leq \frac{p}{4b}$ and $|b^{-1}l\mod p|>\frac{p}{4b}$, $P_2$ contains all $l$ satisfying $|l|\leq \frac{p}{4b}$ and $|b^{-1}l\mod p|\leq \frac{p}{4b}$, $P_3$ contains all $l$ satisfying $|l|> \frac{p}{4b}$ and $|bl\mod p|\leq \frac{p}{4b}$  and $P_4$ contains all $l$ satisfying $|l|> \frac{p}{4b}$ and $|bl\mod p|> \frac{p}{4b}$.
 We have
\begin{multline*}
\sum:=\sum_{l\in L}\frac{1}{4\tan \frac{\pi lb}{p}\tan\frac{\pi l}{p}}=\sum_{l\in P_1}\frac{1}{4\tan \frac{\pi lb}{p}\tan\frac{\pi l}{p}}+\sum_{l\in P_2}\frac{1}{4\tan \frac{\pi lb}{p}\tan\frac{\pi l}{p}}\\+ \sum_{l\in P_3}\frac{1}{4\tan \frac{\pi lb}{p}\tan\frac{\pi l}{p}}+\sum_{l\in P_4}\frac{1}{4\tan \frac{\pi lb}{p}\tan\frac{\pi l}{p}}.\end{multline*}

Observe that 
\begin{equation}\label{equation 5}
l\in P_1 \Leftrightarrow b^{-1}l \in P_3.
\end{equation}

So \begin{equation}\label{equation 6}
\sum_{l\in P_1 \cup P_3}\frac{1}{4\tan \frac{\pi lb}{p}\tan\frac{\pi l}{p}}=\sum_{l\in P_1}\left(\frac{1}{4\tan \frac{\pi lb}{p}\tan\frac{\pi l}{p}}+\frac{1}{4\tan \frac{\pi lb^{-1}}{p}\tan\frac{\pi l}{p}}\right).\end{equation}
If $|l|\leq \frac{p}{4b}$ then 
 \begin{equation}\label{equation 7}
 \frac{1}{4\tan\frac{\pi lb}{p}\tan\frac{\pi l}{p}}=\frac{\cos\frac{\pi lb}{p} \cos\frac{\pi l}{p}}{4\sin \frac{\pi lb}{p} \sin\frac{\pi l}{p}}\geq \frac{\cos\frac{\pi}{4}\cos\frac{\pi}{4b}}{\frac{4\pi lb}{p}\frac{\pi l}{p}}=c_1(b)\frac {p^2}{l^2}. 
 \end{equation}
 If $|b^{-1}l mod p|>\frac{p}{4b}$ then 
 $|\tan \frac{\pi b^{-1}l}{p}|\geq \tan\frac{\pi}{4b}$ 
 and 
 $|\tan \frac{\pi l}{p}|\geq \frac{\pi l}{p}$ which implies
 \begin{equation}\label{equation 8}
 \frac{1}{4\tan \frac{\pi lb^{-1}}{p}\tan\frac{\pi l}{p}}\geq \frac{-1}{\left|4\tan \frac{\pi lb^{-1}}{p}\right|\left|\tan\frac{\pi l}{p}\right|}\geq \frac{-p}{4\tan\frac{\pi}{4b}l}=-\frac{c_2(b)|p|}{|l|}.
 \end{equation}
 If $|l|>\frac{p}{4b}$ and $|bl mod p|>\frac{p}{4b}$
 then 
 \begin{equation}\label{equation 9}
 \frac{1}{4\tan \frac{\pi lb^{-1}}{p}\tan\frac{\pi l}{p}}\geq \frac{-1}{\tan (\frac{\pi}{4b})^2}=-c_3(b).
 \end{equation}
 
 Using (\ref{equation 5}), (\ref{equation 6}), (\ref{equation 7}), (\ref{equation 8}) and (\ref{equation 9})  we have 
 \begin{align}\label{equation 10}
 \sum &\geq \sum_{l\in P_1}\left(\frac {c_1(b)p^2}{l^2}-\frac{c_2(b)|p|}{|l|}\right)+\sum_{l\in P_2}\frac{c_1(b)p^2}{l^2} + \sum_{l\in P_4}(-c_3(b)) \\ &\geq \sum_{l\in P_1\cup P_2}\left(\frac {c_1(b)p^2}{l^2}-\frac{c_2(b)|p|}{|l|}\right)+\sum_{l\in P_4}(-c_3(b)).
 \end{align}
 Note that
 \begin{equation}\label{equation 12}
 \frac{c_1(b)p^2}{l^2}-\frac{c_2(b)|p|}{|l|}\geq -\frac{(c_2(b))^2}{4(c_1(b))^2}=-c_4(b).
 \end{equation}
  If $\frac{|p|}{|l|}\geq \frac{2c_2(b)}{c_1(b)}=c_5(b)$ then 
  \begin{equation}\label{equation 13}
  \frac {c_1(b)p^2}{l^2}-\frac{c_2(b)|p|}{|l|} \geq \frac{c_1(b)p^2}{2l^2}.
  \end{equation}
  
  From Polya-Vinogradov inequality \cite{E}
  \begin{equation*}
  \left|\left\{0<k \leq 2t\sqrt{p}\log p,~k\in L_r\right\}\right|>\sqrt{p}\log p.
  \end{equation*}
  If \begin{equation*} 
  p\geq 4(c_5(b))^2t^2 (\log p)^2=Ct^2(\log p)^2
  \end{equation*}
   and $0<l\leq 2t \sqrt{p}\log p$ then $\frac{|p|}{|l|}\geq c_5(b)$ and  from (\ref{equation 13})  
   \begin{equation}\label{equation 14}
    \frac {c_1(b)p^2}{l^2}-\frac{c_2(b)|p|}{|l|} \geq \frac{c_1(b)p^2}{2l^2}.
    \end{equation}
    Hence if $p\geq 4(c_5(b))^2t^2 (\log p)^2$ and $\frac{p}{2t\sqrt{p}\log p}\leq \frac{p}{4b}$ then from (\ref{equation 10}), (\ref{equation 12}) and (\ref{equation 14})  we have
    \begin{align*}
    \sum &\geq \sum_{\substack{l\in L\\|l|\leq \frac{p}{4b}}}\left(\frac {c_1(b)p^2}{l^2}-\frac{c_2(b)|p|}{|l|}\right)+\sum_{l\in P_4}(-c_3(b))\\ &\geq \sum_{\substack{l\in L\\0<l\leq 2t\sqrt{p}\log p}}\left(\frac {c_1(b)p^2}{l^2}-\frac{c_2(b)|p|}{|l|}\right)+\sum_{\substack{l\in L\\l>2t\sqrt{p}\log p}}\left(\frac {c_1(b)p^2}{l^2}-\frac{c_2(b)|p|}{|l|}\right)+\sum_{l\in P_4}(-c_3(b))\\& 
    \geq \sum_{\substack{l\in L\\0<l\leq 2t\sqrt{p}\log p}}\frac{c_1(b) p^2}{2l^2}+\sum_{\substack{l\in L\\\frac{|l|}{p}\leq \frac{1}{4b}}}(-c_4(b))+\sum_{l\in P_4}(-c_3(b)) 
    \\&\geq \frac{c_1(b)p^{\frac{3}{2}}}{8t^2\log p}-c_4(b)s-c_3(b)s=\frac{ C_1(b)p^{\frac{3}{2}}}{t^2 \log p} -C_2(b)s.
    \end{align*}
\end{proof}

\bibliographystyle{amsplain}

\begin{thebibliography}{10}
 
  \bibitem {B} J. Coquet, {\it A summation formula related to the binary digits}, Invent. Math. {\bf 73} (1983), 107-115. MR {\bf 85c:}11012
  
  \bibitem{H}
    M. Drmota, M. Skałba, {\it Rarified sums of the Thue-Morse sequence}, Trans. Amer. Math. Soc. {\bf 352} (2000) 609-642.
  
  \bibitem{F}M. Drmota, Th. Stoll,
    {\it Newman's phenomenon for generalized Thue-Morse sequences}, Discrete Mathematics, 308 (2008), pp. 1191-1208.
  
  \bibitem{I}
    P. Erd\H{o}s, {\it Bemerkungen zu einer Aufgabe von Elementen}, Archiv Math. {\bf 27} (1976), 159-163 MR {\bf 53}:7969
  
  \bibitem{D}P. J. Grabner, {\it A note on the parity of the sum-of-digits function}, Actes $30^e$ S\'eminaire Lotharingien de Combinatoire, 1993, 35-42. MR {\bf 95k:}11125 
  
  \bibitem{G}P. J. Grabner, T. Herendi, and R. F. Tichy, {\it Fractal digital sums and codes}, Appl. Algebra Engin. Comm. Comput. {\bf 1} (1997), 33-39. CMP 97:17
 
  \bibitem {A}D. J. Newman, {\it On the number of binary digits in a multiple of three}, Proc. Am. Math. Soc.
    {\bf 21} (1969), 719-721. MR {\bf 39}:5466
  
  \bibitem{E}I. M. Vinogradov, `Elemente der Zahlentheorie,' Oldenbourg, M\H{u}nchen, 1956
  
  \end{thebibliography}

  \end{document}